\documentclass[11pt]{amsart}
\usepackage{url,hyperref}
\usepackage{caption}
\usepackage{fourier}
\usepackage{tikz}
\usepackage{pgfplots}
\pgfplotsset{compat=1.5}
\usepackage{mathtools}
\usepackage{enumerate}
\usepackage[margin=2.5cm]{geometry}
\DeclareMathOperator{\Ap}{Ap}
\DeclarePairedDelimiter{\ceil}{\lceil}{\rceil}
\DeclarePairedDelimiter{\floor}{\lfloor}{\rfloor}

\newtheorem{theorem}{Theorem}

\newtheorem{proposition}[theorem]{Proposition}
\newtheorem{corollary}[theorem]{Corollary}
\newtheorem{lemma}[theorem]{Lemma}
\theoremstyle{remark}
\newtheorem{remark}[theorem]{Remark}

\title{The tree of numerical semigroups with low multiplicity}
\date{}
\author[Garc\'{\i}a-S\'anchez]{P. A. Garc\'ia-S\'anchez}
\author[Mar\'in-Arag\'on]{D. Mar\'in-Arag\'on}
\author[Robles-P\'erez]{A. M. Robles-P\'erez}

\begin{document}

\maketitle

\begin{abstract}
    We show that the number of numerical semigroups with multiplicity three, four or five and fixed genus is increasing as a function in the genus. To this end we use the Kunz polytope for these multiplicities. Counting numerical semigroups with fixed multiplicity and genus is then an integer partition problem with some extra conditions (those of membership to the Kunz polytope). For the particular case of multiplicity four, we are able to prove that the number of numerical semigroups with multiplicity four and genus $g$ is the number of partitions  $x+y+z=g+6$ with $0<x\le y\le z$, $x\neq 1$, $y\neq 2$ and $z\neq 3$.
\end{abstract}

\section*{Introduction}

Let $\mathbb{N}$ be the set of the nonnegative integers. A subset $S$ of $\mathbb{N}$ is a \emph{numerical semigroup} if for every $a,b\in S$, $a+b\in S$, $0\in S$ and $\mathbb{N}\setminus S$ is a finite set. The cardinality of the set $N\setminus S$ is called the \emph{genus} of $S$, denoted by $g(S)$, and its maximum is called the \emph{Frobenius number} of $S$ and it is denoted by $\mathrm{F}(S)$. Sometimes the Frobenius number is replaced by the \emph{conductor} of $S$, which is the least integer $\mathrm{c}(S)$ such that $\mathrm{c}(S)+n\in S$ for all $n\in {\mathbb N}$. In fact, $\mathrm{F}(S)=\mathrm{c}(S)-1$.

Let $A$ be a  subset of $\mathbb{N}$, we denote by $\langle A \rangle$ the least monoid containing $A$. It is easy to show that  $\langle A \rangle$ is a numerical semigroup if and only if $\gcd(A)=1$. If $S$ is a numerical semigroup and $\langle A\rangle =S$, then we say that $A$ is a generating system for $S$. If no proper subset of $A$ generates $S$ as a monoid, then $A$ is a \emph{minimal generating system} for $S$. Numerical semigroups have a unique minimal generating system, and it has finitely many elements. The cardinality of this set is the \emph{embedding dimension} of the numerical semigroup (see for instance \cite[Chapter 1]{libro}).

If $S=\langle a_1,\ldots,a_p\rangle$, (with $a_1<\dots<a_p$), the number $a_1$ is called the \emph{multiplicity} of $S$. Clearly, the multiplicity is the least positive integer in $S$.

Let $s$ be a nonzero element of $S$. The \emph{Ap\'ery set} of $s$ in $S$ is the set $\Ap(S,s)=\{x\in S \mid x-s\notin S\}$. According to \cite[Lemma 1.4]{libro}, $\Ap(S,n)=\{0=w(0),w(1),\ldots,w(n-1)\}$, where $w(i)$ is the least element of $S$ congruent with $i$ modulo $n$, for all $i\in\{0,\ldots,n-1\}$.

The study of these invariants is an interesting topic of research, both from the pure semigroupist point of view or due to their counterpart in curves or algebro-geometry codes (see for instance \cite{ns-app} and the references therein). In this paper we focus in the study of the tree of numerical semigroups restricted to a fixed multiplicity, and we give some insight on how this tree is for multiplicities three, four and five. We use linear integer programming to study the growth of the number of numerical semigroups with low multiplicity as the genus increases. It was conjectured in \cite{conj-maria} that the number of numerical semigroups with given genus grows as the Fibonacci sequence. This conjecture was asymptotically proved in \cite{zhai}. In particular, the main result in \cite{zhai} implies that for a given genus $g$ large enough, there are more numerical semigroups with genus $g+1$ than those of genus $g$. However, there is no known lower bound when this occurs, and even if this bound where known and bigger than 68, we would not be able to answer the natural question: are there more numerical semigroups of genus $g+1$ than numerical semigroups of genus $g$? This is because we only know the number of numerical semigroups with genus $g$ for $g\le 67$ (see \cite{f-h}).


The contents of this work is organized as follows. In Section \ref{sec:pre}, we show some previous results and some remarks that will be used later in the paper. In Section \ref{sec:mul-3}, we study semigroups of multiplicity three, and characterize their number of children depending on their Frobenius number and genus. In Section \ref{sec:mul-4}, we make use of the Kunz polytope for multiplicity four and the equation of the hyper-plane corresponding to semigroups with fixed genus $g$. Then we compute the number of integer points in this intersection, and we obtain that the number of numerical semigroups with multiplicity four and genus $g$ is an increasing function in $g$. We show that there are exactly the same number of numerical semigroups of genus $g$ and multiplicity four than unordered partitions of the integer $g+6$ in three elements such that the $i^{th}$ element in the partition is not $i$. In Section \ref{sec:mul-5}, we show that the number of numerical semigroups with multiplicity five and genus $g$ is also an increasing function in $g$.

\section{Preliminaries}\label{sec:pre}

Let $m$ be a positive integer and $g$ be a nonnegative integer. We denote by $\mathcal{S}(m,g)$ the set of numerical semigroups with multiplicity $m$ and genus $g$.

Let $S$ be a numerical semigroup with multiplcity $m$. Then the Ap\'ery set of $m$ in $S$ has $m$ elements. If, as above, $w(i)$ denotes the element in $\Ap(S,m)$ congruent with $i$ modulo $m$, then there exists a positive integer $k_i$ such that $w(i)=k_i m+i$. The tuple $(k_1,\ldots,k_{m-1})$ is known as the \emph{Kunz coordinates} of $S$. It is well known that the set of Kunz coordinates of numerical semigroups with multiplicity $m$ is precisely the set of nonnegative integer solutions of 
\[
\begin{array}{cc}
x_i\geq 1, & \hbox{for all } i\in \{1,\ldots,m-1\},\\
x_i+x_j-x_{i+j}\geq 0, & \hbox{for all } 1\leq i\leq j\leq m-1, i+j\leq m-1,\\
x_i+x_j-x_{i+j- m}\geq -1, &\hbox{for all } 1\leq i\leq j\leq m-1, i+j> m.
\end{array}
\]
These equations define a polytope known as the \emph{Kunz polytope} associated to $m$. In addition, if $S$ has genus $g$, then $k_1 + \dots+k_{m-1}=g$ (see \cite{kunz}). Thus the set $\mathcal{S}(m,g)$ is in one to one correspondence with the set of nonnegative integer solutions of 
\[
\begin{array}{cc}
x_i\geq 1, & \hbox{for all } i\in \{1,\ldots,m-1\},\\
x_i+x_j-x_{i+j}\geq 0, & \hbox{for all } 1\leq i\leq j\leq m-1, i+j\leq m-1,\\
x_i+x_j-x_{i+j-m}\geq -1, &\hbox{for all } 1\leq i\leq j\leq m-1, i+j> m,\\
x_1 + \dots+x_{m-1}=g.
\end{array}
\]
We can use any linear integer software to solve this system of equations and inequalities, and thus we have a procedure to compute the whole set $\mathcal{S}(m,g)$ for $m$ and $g$ fixed. For instance we can use the \texttt{GAP} \cite{gap} package \texttt{NormalizInterface} \cite{normalizinterface}, which is one of the many existing interfaces to \texttt{Normaliz} \cite{normaliz}, to compute this set of integer points. The package \texttt{NumericalSgps} \cite{numericalsgps} already has a function that computes the inequalities defining the Kunz polytope. 

\begin{verbatim}
gap> LoadPackage("normaliz");
gap> LoadPackage("num");
gap> eq:=KunzPolytope(4);
[ [ 1, 0, 0, -1 ], [ 0, 1, 0, -1 ], [ 0, 0, 1, -1 ], [ 2, -1, 0, 0 ], 
  [ 1, 1, -1, 0 ], [ 1, 1, -1, 0 ], [ -1, 1, 1, 1 ], [ -1, 1, 1, 1 ], 
  [ 0, -1, 2, 1 ] ]
gap> cone:=NmzCone(["inhom_inequalities",eq,
    "inhom_equations",[[1,1,1,-4]]]);
<a Normaliz cone>
gap> sol:=NmzModuleGenerators(cone);
[ [ 1, 1, 2, 1 ], [ 1, 2, 1, 1 ], [ 2, 1, 1, 1 ] ]
\end{verbatim}
We can alternatively compute these coordinates with \texttt{numericalsgps} in the following way. For high genus, this approach is not recommended.
\begin{verbatim}
gap> l:=NumericalSemigroupsWithGenus(4);;
gap> l4:=Filtered(l, s->Multiplicity(s)=4);;
gap> List(l4,KunzCoordinatesOfNumericalSemigroup);
[ [ 2, 1, 1 ], [ 1, 2, 1 ], [ 1, 1, 2 ] ]    
\end{verbatim}
For instance, if we want to compute the Kunz coordinates of all the elements in $\mathcal{S}(4,204)$ we may proceed as follows.
\begin{verbatim}
gap> cone:=NmzCone(["inhom_inequalities",eq,
    "inhom_equations",[[1,1,1,-204]]]);
<a Normaliz cone>
gap> sol:=NmzModuleGenerators(cone);;
gap> Length(sol);
3570
\end{verbatim}
This computation takes 38 milliseconds. However, \verb+NumericalSemigroupsWithGenus(204)+ will not stop (recall that we already mentioned in the introduction that the number of numerical semigroups of genus $g$ is known for $g\le 67$).


The following result will be helpfull to study the monotony of the number of numerical semigroups with multiplicity four or five as a function in the genus.

\begin{lemma}\label{creciente}
Let $f_i:\mathbb{N}\rightarrow\mathbb{N}$ be maps with $0\leq i\leq n-1$, and let $f:\mathbb{N}\rightarrow\mathbb{N}$ be a map defined as $f(m)=f_i(m)$, if $i\equiv m\bmod n$. If $f(kn)\leq f(kn+1)\leq \dots\leq f((k+1)n)$, then the function $f$ is nondecreasing.
\end{lemma}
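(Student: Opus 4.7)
The plan is to reduce nondecreasingness of $f$ on all of $\mathbb{N}$ to the pointwise inequality $f(m)\le f(m+1)$ for every $m\in\mathbb{N}$, which then gives the conclusion by telescoping. I will interpret the hypothesis as asserting the chain $f(kn)\le f(kn+1)\le \cdots\le f((k+1)n)$ for \emph{every} $k\in\mathbb{N}$; otherwise the conclusion cannot hold.

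The key observation is that consecutive blocks $[kn,(k+1)n]$ and $[(k+1)n,(k+2)n]$ overlap at the single endpoint $(k+1)n$, and any two consecutive integers $m,m+1$ lie entirely inside one of these blocks. Concretely, given $m\in\mathbb{N}$, I would write $m=kn+r$ with $0\le r\le n-1$ and $k=\lfloor m/n\rfloor$. If $r<n-1$, then $m=kn+r$ and $m+1=kn+r+1$ are both in $\{kn,kn+1,\ldots,(k+1)n\}$; if $r=n-1$, then $m=(k+1)n-1$ and $m+1=(k+1)n$ are still both in that block. In either case the block inequality $f(kn)\le f(kn+1)\le\cdots\le f((k+1)n)$ supplied by the hypothesis yields $f(m)\le f(m+1)$. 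Since $m$ was arbitrary, $f$ is nondecreasing.

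There is no real obstacle here; the lemma is essentially bookkeeping. Its content is the remark that the auxiliary functions $f_i$ indexed by residues modulo $n$ need not themselves be nondecreasing: what one must check in applications is only the block-wise chain for each $k$, which mixes all residues together and, thanks to the shared endpoint $(k+1)n$ between consecutive blocks, automatically implies global monotonicity.
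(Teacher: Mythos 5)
Your proof is correct and follows essentially the same route as the paper's: both arguments exploit the fact that consecutive blocks $[kn,(k+1)n]$ share the endpoint $(k+1)n$, so the block-wise chains glue together into global monotonicity. You phrase this as reducing to the single-step inequality $f(m)\le f(m+1)$ and telescoping, while the paper chains directly from $f(x)$ to $f(y)$ with a short induction on the intermediate multiples of $n$; the difference is purely presentational.
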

\begin{proof}
Let $x,y\in\mathbb{N}$ be such that $x\leq y$, $x=k_1n+i$ and $y=k_2n+j$, with $i$, $j$ non-negative integers. If $k_1=k_2$, then the assertion follows directly from the hypothesis. So assume that $k_1<k_2$. Observe that, by induction, $f((k_1+1)n)\le f(k_2 n)$, since for every nonnegative integer $k$, the inequality $f(kn)\le f((k+1)n)$ holds. Hence, $f(x)=f(k_1n+i)\leq f(k_1n+i+1)\leq \dots \le f((k_1+1)n)\leq f(k_2n)\le  \dots\leq f(k_2n+j)=f(y)$.
\end{proof}

Let $S_1$, $S_2$ be numerical semigroups such that $S_1=S_2\cup \mathrm{F}(S_2)$. Then $S_2$ is called a \emph{child} of $S_1$.

The \emph{tree of numerical semigroups} is a tree with set of vertices the set of numerical semigroups, rooted in $\mathbb{N}$, and $e=(S,T)$ is an edge of this tree if and only if $S$ is a child of $T$, and $S=T\setminus\{x\}$ with $x$ a minimal generator of $T$ larger than $\mathrm{F}(T)$. In \cite[Proposition 1]{arbolSg} it is proved that this definition is well founded, that is, the tree of numerical semigroups is indeed a tree. Figure \ref{fig:arb-5} shows the tree of numerical semigroups of genus up to five (this figured was rendered with \texttt{d3js} \cite{d3}). Observe that a node in the tree of numerical semigroups is a leaf if all its minimal generators are below the Frobenius number. 

\begin{figure}
    \centering
    \includegraphics[scale=.75]{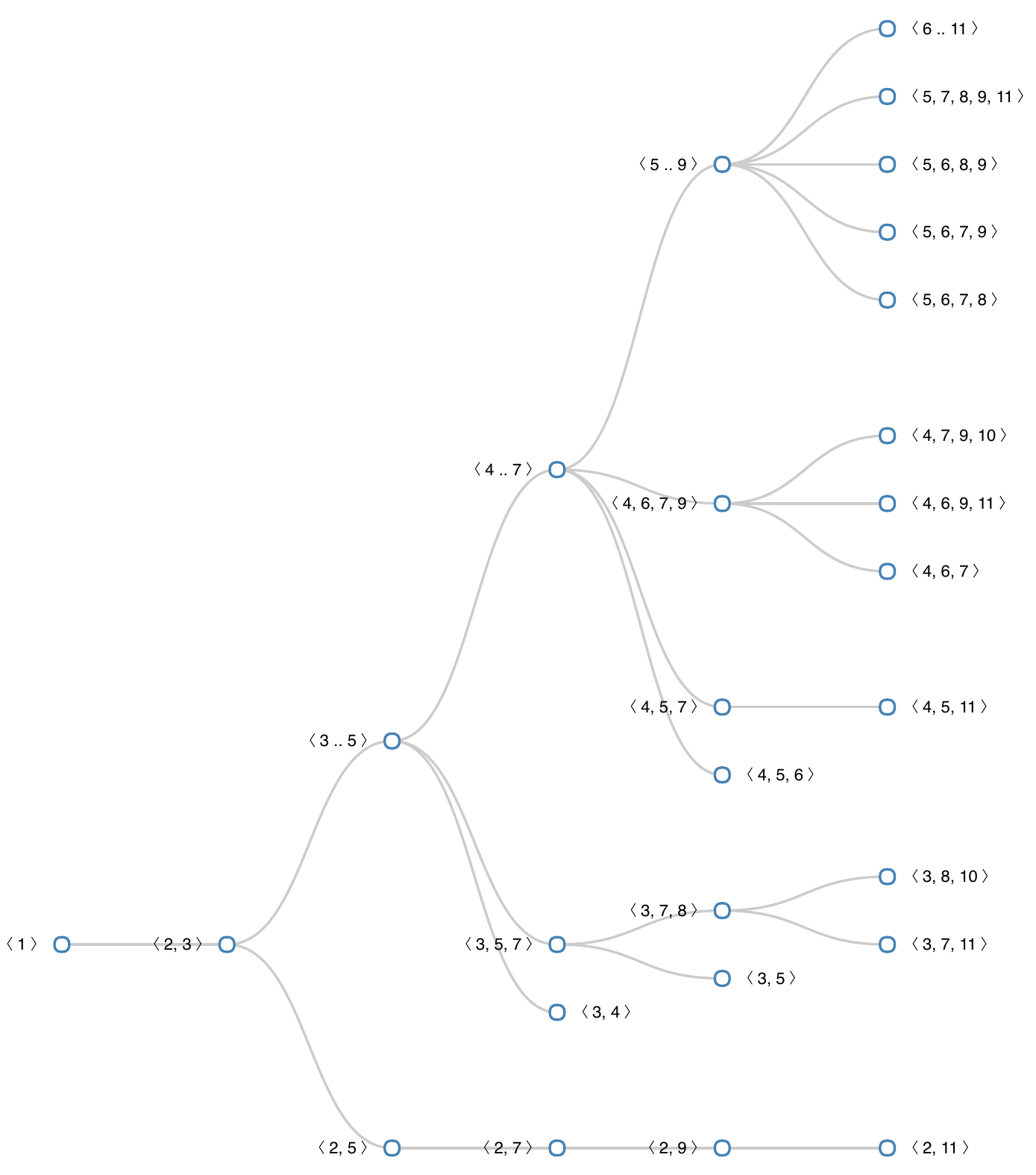}
    \caption{The tree of numerical semigroups up to genus 5}
    \label{fig:arb-5}
\end{figure}

Let $m$ be a positive integer. We can define the tree of numerical semigroups with multiplicity $m$ as the subtree of the tree of numerical semigroups with vertices those numerical semigroups with multiplicity $m$. 

\begin{remark}
Observe that there is at most one semigroup of embedding dimension two and multiplicity two for a fixed genus. Thus the tree of numerical semigroups with multiplicity two is simply a half line.
\end{remark}

A possible approach to solve the conjecture stating that number of numerical semigroups of genus $g$ does not decrease as $g$ increases would be to determine in each level (for each genus $g$) that there are less leaves than nodes with two or more children. This is what we have tried to achieve with low multiplicities. We were able to prove this for multiplicity three (for multiplicity one or two the problem is trivial). For multiplicity four the number of cases and subcases was too large, and thus we decided to use directly linear integer programming. 

\section{Tree of semigroups of multiplicity three}\label{sec:mul-3}

Numerical semigroups with multiplicity tree have been extensively studied in the literature. They have been parametrized in different ways. Next we recall some possibilities, one depending on the Frobenius number and genus, while the other relies on the remainder of the conductor modulo three. 

\begin{lemma}[{\cite[Lemma 6]{multiplicity34}}]\label{l:JC}
Let $S$ be a numerical semigroup with multiplicity three, Frobenius number $f$ and genus $g$. Then $\frac{f+1}{2}\leq g < \frac{2 f+3}{3}$.
\end{lemma}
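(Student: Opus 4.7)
The plan is to reduce both bounds to one-line inequalities in the Kunz coordinates $(k_1,k_2)$ of $S$ and then finish by a two-case split. Since $S$ has multiplicity $3$, write $\Ap(S,3)=\{0,w(1),w(2)\}$ with $w(i)=3k_i+i$ and $k_1,k_2\ge 1$; then by the formulas recalled in the preliminaries, $g=k_1+k_2$, and $f=\max(w(1),w(2))-3$, so $f$ equals $3k_1-2$ or $3k_2-1$ depending on which of $w(1),w(2)$ is larger. Before splitting, I would extract the two non-trivial Kunz-polytope inequalities: $2w(1)\in S$ with $2w(1)\equiv 2\pmod 3$ forces $2w(1)\ge w(2)$, giving $k_2\le 2k_1$; symmetrically $2w(2)\in S$ with $2w(2)\equiv 1\pmod 3$ forces $2w(2)\ge w(1)$, giving $k_1\le 2k_2+1$.

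Next I would do the case split on the maximum of the Apéry set, which is exhaustive and disjoint because the residues of $w(1)$ and $w(2)$ modulo $3$ differ. If $w(1)>w(2)$ (equivalently $k_1\ge k_2+1$), then $f=3k_1-2$, so $(f+1)/2=(3k_1-1)/2$ and $(2f+3)/3=2k_1-1/3$; the upper bound $k_1+k_2<2k_1-1/3$ is immediate from $k_2\le k_1-1$, while the lower bound $k_1+k_2\ge(3k_1-1)/2$ is a rewriting of the Kunz inequality $k_2\ge(k_1-1)/2$. If instead $w(1)<w(2)$ (equivalently $k_1\le k_2$), then $f=3k_2-1$, so $(f+1)/2=3k_2/2$ and $(2f+3)/3=2k_2+1/3$; the upper bound $k_1+k_2<2k_2+1/3$ follows from $k_1\le k_2$, and the lower bound $k_1+k_2\ge 3k_2/2$ is the Kunz inequality $k_1\ge k_2/2$.

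The argument is essentially bookkeeping once the Kunz coordinates are in place: each bound collapses to a single arithmetic comparison. The only mildly delicate point will be pairing each case with the correct Kunz inequality, since in each case the lower bound is attained exactly on the Kunz facet opposite to the one determining the maximum of the Apéry set. The strictness of the upper bound comes for free because $(2f+3)/3$ is never an integer when $f\equiv 1$ or $2\pmod 3$, which is the only possibility here (as $3\in S$ forces $f\not\equiv 0\pmod 3$).
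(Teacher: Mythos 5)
The paper states this lemma only as a citation to \cite[Lemma 6]{multiplicity34} and supplies no proof of its own, so there is no in-paper argument to compare against; judged on its own, your proof is correct and complete. The bookkeeping checks out: with $g=k_1+k_2$ (Selmer's formula applied to $\Ap(S,3)=\{0,3k_1+1,3k_2+2\}$) and $f=\max(3k_1+1,3k_2+2)-3$, the two nontrivial Kunz inequalities for $m=3$ are exactly $2k_1\ge k_2$ and $2k_2\ge k_1-1$, your case split ($k_1\ge k_2+1$ versus $k_1\le k_2$) is exhaustive and matches which Ap\'ery element is maximal, and in each case the lower bound $g\ge (f+1)/2$ reduces precisely to the Kunz inequality opposite to the one that selects the maximum, while the strict upper bound $g<(2f+3)/3$ reduces to the case hypothesis itself. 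This Kunz-coordinate derivation is very much in the spirit of the computations the paper performs for multiplicities four and five, so it fits the paper's framework well even though the authors chose to import the result. Your closing remark about $(2f+3)/3$ never being an integer is redundant---strictness already falls out of the integer comparisons $k_2\le k_1-1<k_1-\tfrac13$ and $k_1\le k_2<k_2+\tfrac13$ in the respective cases---but it is not wrong.
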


\begin{theorem}[{\cite[Theorem 7]{multiplicity34}}]\label{t:JC}
Let $f$ be a positive integer greater than or equal to four that is not multiple of three. Let $g$ be a positive integer such that $\frac{f+1}{2}\leq g < \frac{2 f+3}{3}$. Then $S=\langle 3,3g-f,f+3 \rangle$ is a numerical semigroup with multiplicity three, Frobenius number $f$ and genus $g$.
\end{theorem}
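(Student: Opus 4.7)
The plan is to extract everything from the Ap\'ery set $\Ap(S,3)$, since multiplicity, Frobenius number, and genus are all determined by it. First I would verify that $S$ is a numerical semigroup of multiplicity three: since $3\nmid f$, we have $\gcd(3,f+3)=1$, hence $\gcd(3,3g-f,f+3)=1$; meanwhile $g\ge (f+1)/2$ combined with $f\ge 4$ gives $3g-f\ge (f+3)/2>3$ and $f+3>3$, so $3$ is the smallest positive element of $S$.

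The heart of the argument is showing $\Ap(S,3)=\{0,\,3g-f,\,f+3\}$. Every element of $S$ has the form $3a+(3g-f)b+(f+3)c$ with $a,b,c\in\mathbb{N}$, and its residue modulo $3$ depends only on $b$ and $c$; moreover, once $b\ge 3$ or $c\ge 3$ one can subtract $3(3g-f)$ or $3(f+3)$ without changing the residue mod $3$, producing a strictly smaller element in the same class. So locating $w(1)$ and $w(2)$ reduces to minimizing over $b,c\in\{0,1,2\}$. I would then split into cases on $f\bmod 3$. When $f\equiv 1\pmod 3$, the generator $f+3$ has residue $1$ and $3g-f$ has residue $2$, and the only competitors among the finite list of candidates are $2(3g-f)$ (residue $1$) and $2(f+3)$ (residue $2$). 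The two bounds on $g$ translate into precisely the inequalities needed to rule these out: $2g\ge f+1$ is equivalent to $f+3\le 2(3g-f)$, while $3g<2f+3$ yields $3g-f\le f+2<2(f+3)$. Hence $w(1)=f+3$ and $w(2)=3g-f$. The case $f\equiv 2\pmod 3$ is symmetric, with the roles of $w(1)$ and $w(2)$ exchanged.

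With $\Ap(S,3)$ in hand, the remaining conclusions are automatic. From $3g<2f+3$ we get $3g-f<f+3$, so $\max\Ap(S,3)=f+3$ and $\mathrm{F}(S)=(f+3)-3=f$. For the genus, writing $w(i)=3k_i+i$ and using $g(S)=k_1+k_2$, in the case $f\equiv 1\pmod 3$ one finds
\[
k_1+k_2=\frac{(f+3)-1}{3}+\frac{(3g-f)-2}{3}=\frac{f+2}{3}+\frac{3g-f-2}{3}=g,
\]
and the other residue case gives the same total by the analogous calculation. The only delicate point in the whole proof is the case analysis pinning down the Ap\'ery set, and this reduces to the two elementary inequalities already derived from the hypotheses on $g$; once those are in place, all three claims of the theorem fall out simultaneously.
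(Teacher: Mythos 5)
Your argument is correct. Note that the paper does not actually prove this statement: it is quoted verbatim from Rosales's work (\cite[Theorem 7]{multiplicity34}) and used as a black box, so there is no in-paper proof to compare against. Your self-contained verification via the Ap\'ery set is the natural route and everything checks out: the two hypotheses on $g$ translate exactly into $f+3\le 2(3g-f)$ and $3g-f<f+3<2(f+3)$, which pin down $\Ap(S,3)=\{0,3g-f,f+3\}$, and then $\mathrm{F}(S)=\max\Ap(S,3)-3=f$ and $\mathrm{g}(S)=k_1+k_2=g$ follow from the standard formulas (the genus identity $k_1+\dots+k_{m-1}=g$ is the one the paper itself invokes from \cite{kunz}). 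One small point worth making explicit when you reduce the minimization to $b,c\in\{0,1,2\}$: the mixed candidates with $b\ge 1$ and $c\ge 1$ can be discarded at once because $(3g-f)+(f+3)=3(g+1)$ is a multiple of $3$, so any such combination exceeds a smaller element of $S$ in the same residue class; this leaves only $3g-f$, $2(3g-f)$, $f+3$, $2(f+3)$ as genuine competitors, which is what your case analysis assumes.
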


\begin{remark}\label{r:cotaFrob} The inequalities
$\frac{f+1}{2}\leq g < \frac{2 f+3}{3}$ hold true if and only if $\frac{3 g-3}{2}< f \leq 2 g-1$.
\end{remark}

By using this two results, \cite[Corollary 10]{multiplicity34} gives a formula for the number of numerical semigroups with multiplicity three and genus $g$, $g$ a nonnegative integer: 
\[
\#\mathcal{S}(3,g)=g-\floor*{\frac{2g-1}3}.
\]
Note that this is equivalent to $\#\mathcal{S}(3,g)=\floor{g/3} +1$, which is nondecreasing. Also, in  \cite[Theorem~4.1]{counting}, it is shown that  $\#\mathcal{S}(3,g)=\ceil*{(g+1)/3}$, which is equivalent to the preceding expressions.

\begin{corollary}\label{c:nondecreasing-three}
The number of numerical semigroups with multiplicity three and genus $g$, with $g$ a nonnegative integer, is a nondecreasing function in $g$.
\end{corollary}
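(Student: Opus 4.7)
The plan is to derive the corollary directly from the closed-form expression $\#\mathcal{S}(3,g)=\lfloor g/3\rfloor+1$ recalled immediately before the statement (which itself follows from \cite[Corollary 10]{multiplicity34}). Since the floor function is nondecreasing and adding a constant preserves monotonicity, the map $g\mapsto \lfloor g/3\rfloor+1$ is nondecreasing on $\mathbb{N}$, giving the conclusion in a single line.

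Alternatively, and perhaps more in the spirit of the paper, I would apply Lemma \ref{creciente} with $n=3$. Writing $f(g)=\#\mathcal{S}(3,g)$, a short case check on the residue of $g$ modulo $3$ yields
\[
f(3k)=f(3k+1)=f(3k+2)=k+1\quad\text{and}\quad f(3(k+1))=k+2,
\]
so the chain $f(3k)\le f(3k+1)\le f(3k+2)\le f(3(k+1))$ required by Lemma \ref{creciente} holds trivially. The lemma then delivers that $f$ is nondecreasing on all of $\mathbb{N}$.

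There is essentially no obstacle here: the work was already done in establishing the counting formula $\#\mathcal{S}(3,g)=\lfloor g/3\rfloor+1$, and the corollary is purely a monotonicity observation on that formula. The only choice is stylistic, whether to invoke monotonicity of the floor function directly or to route the argument through the general Lemma \ref{creciente} (which must be invoked anyway in the multiplicity four and five sections).
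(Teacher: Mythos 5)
Your proposal is correct and matches the paper's own reasoning: the corollary is stated there as an immediate consequence of the identity $\#\mathcal{S}(3,g)=g-\lfloor(2g-1)/3\rfloor=\lfloor g/3\rfloor+1$, whose right-hand side is visibly nondecreasing. The alternative route through Lemma \ref{creciente} with $n=3$ is also valid but is not needed and is not what the paper does here.
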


For multiplicity three we can distinguish which semigroups have children. We will show that if the numerical semigroup has embedding dimension two, then it cannot have children; while if it has embedding dimension three (maximal embedding dimension) it may have one or two children. 

\begin{lemma}\label{l:m3-e2}
Let $S$ be a numerical semigroup with multiplicity three and embedding dimension two. Then $S$ has no children.
\end{lemma}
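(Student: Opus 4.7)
The plan is to use the fact that a numerical semigroup $S$ has a child precisely when $S$ possesses a minimal generator that exceeds $\mathrm{F}(S)$; removing any such generator yields a child, and this is the only way to obtain a child (as recalled in the discussion of the tree of numerical semigroups above).

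First I would describe $S$ explicitly. Since $S$ has multiplicity three and embedding dimension two, its minimal generating system consists of $3$ and a single integer $n$ with $\gcd(3,n)=1$ and $n>3$, so $S=\langle 3,n\rangle$. I would then invoke the classical Sylvester--Frobenius formula for two coprime generators to obtain
\[
\mathrm{F}(S)=3n-3-n=2n-3.
\]
(Alternatively, this can be read off directly from the Ap\'ery set $\Ap(S,3)=\{0,n,2n\}$.)

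Next I would compare the two minimal generators of $S$ with $\mathrm{F}(S)$. The multiplicity satisfies $3<2n-3$ since $n>3$, and the other generator satisfies $n<2n-3$ for the same reason. Hence neither $3$ nor $n$ lies above $\mathrm{F}(S)$.

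Finally I would conclude: since every child of $S$ corresponds to a minimal generator of $S$ strictly greater than $\mathrm{F}(S)$, and no such generator exists, $S$ has no children. There is no serious obstacle here; the only mild point is keeping straight that the children of $S$ correspond to its minimal generators above the Frobenius number (the definition given just before Figure~\ref{fig:arb-5}), so the argument reduces to a one-line inequality in $n$.
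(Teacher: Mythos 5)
Your proposal is correct and follows essentially the same route as the paper: write $S=\langle 3,n\rangle$, apply Sylvester's formula to get $\mathrm{F}(S)=2n-3$, and observe that neither minimal generator exceeds the Frobenius number. No issues.
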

\begin{proof}
If $S$ has embedding dimension two, then  $S=\langle 3,k\rangle$ for some integer $k$ with $k\ge 3$ and $\gcd(k,3)=1$. By Sylvester's formula, the Frobenius number of $S$ is $3k-3-k=2k-3$, which is greater than or equal to $k$. Thus $S$ has no minimal generators greater than the Frobenius number, and consequently it has no children.
\end{proof}

In order to see what happens in the maximal embedding dimension case, we will use the following result.

\begin{proposition}[{\cite[Proposition 3.1]{karakas}}]\label{t:karakas}
Let $S$ be a numerical semigroup with multiplicity three, conductor $c=3b + \bar{c}$, with $b\in\mathbb{N}$, and maximal embedding dimension.
	\begin{enumerate}[(i)]
	\item If $\bar{c}=0$, then $S=\langle 3, 3k_1+1, 3b+2 \rangle$, where $\ceil*{\frac{b+1}{2}}\leq k_1\leq b$,
	\item If $\bar{c}=2$, then $S=\langle 3, 3k_2+2, 3b+4 \rangle$, where $\ceil*{\frac{b+1}{2}}\leq k_1\leq b$.
	\end{enumerate}
\end{proposition}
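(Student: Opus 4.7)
The plan is to work directly with the Apéry set at the multiplicity. Since $S$ has multiplicity three, write $\Ap(S,3)=\{0,w(1),w(2)\}$ with $w(i)$ the minimum element of $S$ congruent to $i$ modulo $3$, and set $w(i)=3k_i+i$ with integers $k_i\geq 1$. Maximal embedding dimension is equivalent to both $w(1)$ and $w(2)$ being minimal generators, so $S=\langle 3,w(1),w(2)\rangle$. From $\mathrm{F}(S)=\max\{w(1),w(2)\}-3$ we get $c=\max\{w(1),w(2)\}-2$. Since $w(1)\equiv 1$ and $w(2)\equiv 2\pmod 3$, the maximum is congruent to $1$ or $2$ modulo $3$, so $c\equiv 0$ or $2\pmod 3$; this forces $\bar c\in\{0,2\}$ and explains the absence of a $\bar c=1$ case in the statement.

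For case (i), $\bar c=0$ forces $\max=w(2)=3b+2$. Writing $w(1)=3k_1+1$, the inequality $w(1)<w(2)$ together with distinct residues modulo $3$ yields $3k_1+1\leq 3b+1$, hence $k_1\leq b$. For the lower bound I would observe that $2w(1)\in S$ and $2w(1)\equiv 2\pmod 3$, which implies $w(2)\leq 2w(1)$; maximal embedding dimension rules out $w(2)=2w(1)$ (otherwise $w(2)$ is not a minimal generator), so the inequality is strict: $3b+2<6k_1+2$, i.e., $k_1>b/2$, equivalently $k_1\geq\lceil(b+1)/2\rceil$. Conversely, I would verify that for any $k_1$ in the stated range, $\{3,\,3k_1+1,\,3b+2\}$ generates a numerical semigroup with maximal embedding dimension, multiplicity $3$, and conductor $3b$; this reduces to checking that $\{0,\,3k_1+1,\,3b+2\}$ is the Apéry set of $3$, which in turn follows from $w(2)\leq 2w(1)$ (closure) and $w(2)\neq 2w(1)$ (maximal embedding dimension).

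Case (ii) with $\bar c=2$ is completely symmetric: now $w(1)=3b+4$ is the maximum and the argument swaps the roles of $w(1)$ and $w(2)$, yielding $\lceil(b+1)/2\rceil\leq k_2\leq b$ (the statement appears to have a typo, writing $k_1$ for $k_2$ in the bounds). The only subtle point in the whole argument is translating the maximal embedding dimension hypothesis into the strict inequality between the two relevant Apéry elements; once that is in place, the ceiling bound falls out by an immediate parity/division calculation. I expect no deeper obstacle, as the proof is essentially a direct analysis of Apéry sets of size three.
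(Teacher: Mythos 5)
Your argument is correct, but note that the paper itself gives no proof of this proposition: it is quoted verbatim from \cite[Proposition 3.1]{karakas}, so there is nothing internal to compare against. Your route is the natural one and is sound: writing $\Ap(S,3)=\{0,3k_1+1,3k_2+2\}$, using $\mathrm{F}(S)=\max\Ap(S,3)-3$ to identify which Ap\'ery element is the largest in each residue case of the conductor, getting the upper bound $k_i\le b$ from that comparison, and getting the lower bound $k_i\ge\ceil*{(b+1)/2}$ from the observation that doubling the smaller nonzero Ap\'ery element lands in the other residue class (so it dominates the larger Ap\'ery element), with maximal embedding dimension forcing the inequality to be strict. The one point worth making fully explicit is that the \emph{only} possible factorization of $w(2)$ as a sum of two nonzero elements of $S$ uses two elements congruent to $1$ modulo $3$ (the other residue splittings force a summand at least $w(2)$ itself), so $w(2)$ fails to be a minimal generator exactly when $w(2)=2w(1)$; with that spelled out, both directions, including the converse verification of the Ap\'ery set, go through, and you are right that the $k_1$ in the bounds of part (ii) is a typo for $k_2$.
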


Recall that in order to determine the number of children of a numerical semigroup in the tree of numerical semigroups, one has to know which generators are greater than the Frobenius number. Next, we give a better upper bound for the Frobenius number for the maximal embedding dimension case.

\begin{proposition}\label{p:bound-f-e3}
Let $S$ be a numerical semigroups with multiplicity three and maximal embedding dimension. Then $\mathrm{F}(S)\leq 2\mathrm{g}(S)-2$. 
\end{proposition}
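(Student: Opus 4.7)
The plan is to apply Proposition \ref{t:karakas}, which parametrises the maximal embedding dimension case by the residue $\bar c$ of the conductor modulo three. Only $\bar c\in\{0,2\}$ can occur, since $\mathrm{F}(S)=c-1$ cannot be a multiple of three (it is a gap, whereas every multiple of three lies in $S$). In each of the two subcases I would identify the Ap\'ery set $\Ap(S,3)=\{0,w(1),w(2)\}$, read off the Kunz coordinates $k_1,k_2$ defined by $w(i)=k_i\cdot 3+i$, and use the standard identities $\mathrm{g}(S)=k_1+k_2$ and $\mathrm{F}(S)=\max\Ap(S,3)-3$ to translate the target inequality $\mathrm{F}(S)\le 2\mathrm{g}(S)-2$ into an inequality between the Karakas parameters.

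For subcase (i), $S=\langle 3,\,3k_1+1,\,3b+2\rangle$ with $\lceil (b+1)/2\rceil\le k_1\le b$. One has $w(1)=3k_1+1$ and $w(2)=3b+2$, whence $\mathrm{g}(S)=k_1+b$ and $\mathrm{F}(S)=3b-1$. The target inequality $3b-1\le 2(k_1+b)-2$ simplifies to $k_1\ge (b+1)/2$, which is precisely the lower bound $k_1\ge \lceil (b+1)/2\rceil$ from Proposition \ref{t:karakas}. Subcase (ii), with $S=\langle 3,\,3k_2+2,\,3b+4\rangle$ and $\lceil (b+1)/2\rceil\le k_2\le b$, is symmetric: here $c=3b+2$ forces $\max\Ap(S,3)=c+2=3b+4=3(b+1)+1$, so the Kunz coordinates are $b+1$ and $k_2$, giving $\mathrm{g}(S)=b+1+k_2$ and $\mathrm{F}(S)=3b+1$. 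Then $3b+1\le 2(b+1+k_2)-2$ collapses once more to $k_2\ge (b+1)/2$.

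The only mildly subtle step is checking that in each subcase the two nonzero generators really are $w(1)$ and $w(2)$, i.e.\ that neither Ap\'ery element is beaten by a combination of the others. This is automatic from $\max\Ap(S,3)=c+m-1$, which pins down the larger of the two (namely $3b+2$ in (i) and $3b+4$ in (ii)); the other generator is trivially the minimum of its residue class since the remaining two generators $3$ and $3b+\bar c+2$ lie in different residue classes modulo three. Past this identification, the argument is purely arithmetic, and the Karakas bounds $k_i\ge \lceil (b+1)/2\rceil$ are exactly tailored to yield $\mathrm{F}(S)\le 2\mathrm{g}(S)-2$.
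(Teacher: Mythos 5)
Your proof is correct and follows essentially the same route as the paper: both invoke Karakas's parametrization (Proposition \ref{t:karakas}), split on the conductor modulo three, and reduce the desired inequality in each case to the lower bound $k_i\ge\lceil (b+1)/2\rceil$. The only difference is bookkeeping: the paper relates $(b,k_i)$ to $(f,g)$ by matching the Karakas generators against the parametrization $S=\langle 3,3g-f,f+3\rangle$ of Theorem \ref{t:JC}, whereas you read off $\mathrm{g}(S)=k_1+k_2$ and $\mathrm{F}(S)=\max\Ap(S,3)-3$ directly from the Kunz coordinates --- an equally valid and slightly more self-contained computation.
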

\begin{proof}
Let $c$, $f$ and $g$ be the conductor, Frobenius number and genus of $S$, respectively. 

If $c\equiv 0\bmod 3$, by Proposition \ref{t:karakas}, $S=\langle 3, 3k_1+1, 3b+2 \rangle$, for some integer $k_1$ fulfilling $\ceil*{\frac{b+1}{2}}\leq k_1\leq b$. By Theorem \ref{t:JC}, $S=\langle 3,3 g-f,f+3 \rangle$. Then, we deduce that $b=\frac{f+1}{3}$ and $k_1=g-\frac{f+1}{3}$ where $\ceil*{\frac{f+4}{6}}\leq g-\frac{f+1}{3}\leq \frac{f+1}{3}$.

As $x<\ceil*{x}$, we have  $\frac{f+4}{6} \leq \ceil*{\frac{f+4}{6}} \leq g-\frac{f+1}{3}$, that is, $\frac{f+4}{6}\leq g-\frac{f+1}{3}$. So, $f\leq 2g-2$.

If $c\equiv 2\bmod 3$, by Proposition \ref{t:karakas}, $S=\langle 3, 3k_2+2, 3b+4 \rangle$ for some integer $k_1$ such that $\ceil*{\frac{b+1}{2}}\leq k_2\leq b$; by Theorem \ref{t:JC}, $S=\langle 3,3 g-f,f+3 \rangle$. Hence, $b=\frac{f-1}{3}$ and $k_2=g-\frac{f+2}{3}$, where $\ceil*{\frac{f+2}{6}}\leq g-\frac{f+2}{3}\leq \frac{f-1}{3}$.

Since $x<\ceil*{x}$, we obtain $\frac{f+2}{6} \leq \ceil*{\frac{f+2}{6}}\leq g-\frac{f+2}{3}$, that is, $\frac{f+2}{6}\leq g-\frac{f+2}{3}$. So, $f\leq 2 g-2$.
\end{proof}



With these results we can characterize numerical semigroups with embedding dimension three having no children, one child or two children, in the tree of numerical semigroups with embedding dimension three.

\begin{theorem}
Let $S$ be a numerical semigroup with multiplicity three. And let $\mathcal{T}_3$ be the tree of numerical semigroups of embedding dimension three. 	
\begin{enumerate}[1.]
    \item $S$ is a leaf in $\mathcal{T}_3$ if and only if $S$  has embedding dimension two.
	\item $S$ has one child in $\mathcal{T}_3$ if and only if $S$ has maximal embedding dimension and $\mathrm{F}(S)>\frac{3\mathrm{g}(S)}{2}$.
	\item $S$ has two children in $\mathcal{T}_3$ if and only if $S$ has maximal embedding dimension and $\mathrm{F}(S)<\frac{3\mathrm{g}(S)}{2}$.
\end{enumerate}
	Moreover, if we fix the genus $g\ge 5$, there is 
\begin{enumerate}[i.]
	\item a numerical semigroup with embedding dimension three and genus $g$ with two children in in $\mathcal{T}_3$; 
	\item a numerical semigroup with embedding dimension two and genus $g$ (and thus a leaf in $\mathcal{T}_3$) if and only if $g \bmod 3\in \{0,1\}$;
	\item one or more numerical semigroups with embedding dimension three and genus $g$ with one child in $\mathcal{T}_3$. 
\end{enumerate}
In particular,  
\[\#\mathcal{S}(3,g+1) = 
\begin{cases}
\#\mathcal{S}(3,g), & \hbox{if }\,g\bmod 3\in\{0,1\},\\
\#\mathcal{S}(3,g)+1, & \hbox{otherwise}.
\end{cases}
\]
\end{theorem}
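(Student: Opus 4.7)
The plan is to determine the number of children of each $S\in\mathcal{S}(3,g)$ directly from its minimal generating system, using the parametrization in Theorem \ref{t:JC}. The embedding-dimension-two case is handled by Lemma \ref{l:m3-e2}: such an $S$ is a leaf. In the maximal-embedding-dimension case, Theorem \ref{t:JC} writes $S=\langle 3,\, 3g-f,\, f+3\rangle$, where $f=\mathrm{F}(S)$; the children of $S$ in $\mathcal{T}_3$ correspond to the minimal generators exceeding $f$. The generator $3$ never does, $f+3$ always does, and $3g-f>f$ is equivalent to $f<3g/2$. The boundary $f=3g/2$ is impossible, since $f\not\equiv 0\pmod 3$. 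This settles claims 1--3.

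For the moreover part, fix $g\ge 5$. For (i), the condition $f<3g/2$ together with Remark \ref{r:cotaFrob} forces $f$ into the open interval $((3g-3)/2,\, 3g/2)$, of length $3/2$. A short parity case split shows this interval contains a unique integer---namely $3k-1$ when $g=2k$ and $3k+1$ when $g=2k+1$---and both values are coprime to $3$, so Theorem \ref{t:JC} produces exactly one numerical semigroup of multiplicity three and genus $g$ with two children. For (ii), every embedding-dimension-two numerical semigroup with multiplicity three has the form $\langle 3,k\rangle$ with $\gcd(3,k)=1$ and genus $k-1$ by Sylvester's formula, so one with genus $g$ exists iff $k=g+1$ is coprime to $3$, that is, iff $g\bmod 3\in\{0,1\}$. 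For (iii), I exhibit an $f$ with $3g/2<f\le 2g-2$ (the upper bound from Proposition \ref{p:bound-f-e3}) that is not divisible by $3$: the values $f=8$ for $g=5$ and $f=10$ for $g=6$ work, while for $g\ge 7$ the interval contains at least two consecutive integers and hence at least one not divisible by $3$.

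To derive the closing formula, write $L$, $O$ and $T$ for the numbers of leaves, one-child nodes and two-child nodes in $\mathcal{S}(3,g)$ viewed inside $\mathcal{T}_3$. Since children of multiplicity-three semigroups have again multiplicity three, $\#\mathcal{S}(3,g+1)=O+2T$ while $\#\mathcal{S}(3,g)=L+O+T$, so $\#\mathcal{S}(3,g+1)-\#\mathcal{S}(3,g)=T-L$. By (i) one has $T=1$, and by (ii) $L=1$ precisely when $g\bmod 3\in\{0,1\}$ and $L=0$ otherwise, yielding the piecewise formula.

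The main subtlety I anticipate is the \emph{uniqueness} in (i): proving that there is exactly one, not merely at least one, semigroup with two children. This hinges on the tightness of the interval $((3g-3)/2,\, 3g/2)$, which has length $3/2$ and hence admits a single integer, combined with the parity check showing that this integer is never a multiple of $3$. The remaining work is careful bookkeeping with Theorem \ref{t:JC}, Remark \ref{r:cotaFrob} and Proposition \ref{p:bound-f-e3}.
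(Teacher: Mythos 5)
Your proposal is correct and follows essentially the same route as the paper: Lemma \ref{l:m3-e2} for the leaf case, the parametrization $S=\langle 3,3g-f,f+3\rangle$ from Theorem \ref{t:JC} with the comparison of $f$ against $3g/2$, and the interval/parity analysis via Remark \ref{r:cotaFrob} and Proposition \ref{p:bound-f-e3}. The only difference is that you make the derivation of the closing formula explicit through the $T-L$ leaf/child count, a step the paper leaves implicit after establishing (i)--(iii).
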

\begin{proof}
We already know that $S$ has no children in the case $S$ has embedding dimension two (Lemma \ref{l:m3-e2}). Observe that if $S$ has embedding dimension two, $S=\langle 3,k\rangle$ for some integer $k$, and $k$ is determined by the Frobenius number of $S$ (by Sylsvester's formula). As $S$ is symmetric, the genus of $S$ equals $(\mathrm{F}(S)+1)/2$. Hence $k$ is determined by $\mathrm{g}(S)$. This means that for a fixed genus $g$ there is at most one numerical semigroup with multiplicity two and genus $g$. Sylverster's formula yields $\mathrm{F}(S)=3k-3-k$, whence $k=(\mathrm{F}(S)+1)/2+1 = g(S)+1>3$. Thus there will be only a semigroup with this conditions if $g>2$ and $g+1$ is not a multiple of $3$. That is, when $g>2$ and either $g\equiv 0 \bmod 3$ or $g\equiv 1 \bmod 3$.

By Lemma~\ref{l:JC} and Theorem~\ref{t:JC}, we also know that $S=\langle 3,3g-f, f+3\rangle$ with $f=\mathrm{F}(S)$ and $g=\mathrm{g}(S)$, and that $3 g-f<f+3$. Hence in the case $f<3 g-f$, $S$ has two children, and in the case $f>3 g-f$, the semigroup $S$ will have one child.  

By Remark~\ref{r:cotaFrob}, $\frac{3 g-3}{2}< f$. Therefore, if $S$ has two children, then $\frac{3 g-3}{2}< f<\frac{3 g}{2}$, and there is at most an integer $f$ that is not a multiple of three fulfilling these conditions. If $g\equiv 1 \bmod 2$, then $f=\frac{3 g-1}2$; while for $g\equiv 0 \bmod 2$, we have $f=\frac{3 g-2}2$.

If $f>3 g-f$, by Remark~\ref{r:cotaFrob} and Proposition~\ref{p:bound-f-e3}, we deduce $3 g/2<f\le 2 g-2$. Notice that $3 g/2< 2 g-2$ if and only if $g>4$. For $g$ even, that is, $g=2k$ for some positive integer $k$, we have $3k<F\le 4 k-2$, which makes $k-2$ possible integers. Observe that $3k+1$ is a possible choice of $f$ that is not a multiple of three. For $g=2 k+1$, we have $3 k+3/2<f\le 4 k$, whence $3 k+2\le f \le 4k$. Here $f=3k+2$ is a possible choice for $f$.
\end{proof}

\begin{remark}
Observe that with the above result we recover that  $\#\mathcal{S}(3,g)\leq \#\mathcal{S}(3,g+1)$. Indeed we know when  $\#\mathcal{S}(3,g)<\#\mathcal{S}(3,g+1)$.
\end{remark}


\begin{figure}
    \centering
    \includegraphics[scale=.75]{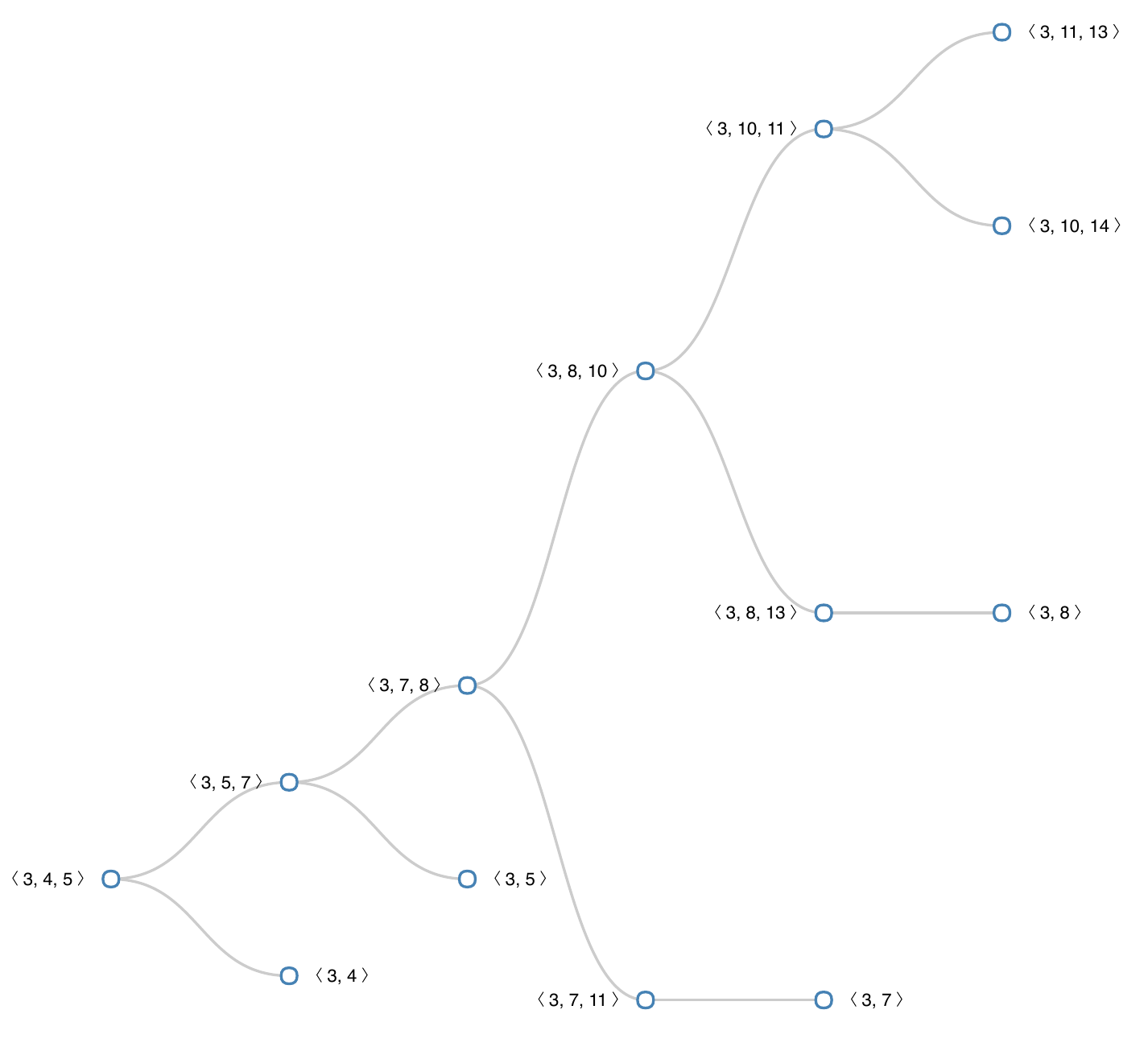}
    \caption{The tree of numerical semigroups with multiplicity three and genus up to seven}
    \label{fig:arb-m-3}
\end{figure}

\section{Tree of semigroups of multiplicity four}\label{sec:mul-4}

In this section we check that $\#\mathcal{S}(4,g)$ is a non decreasing map in $g$, that is, if $g_1\leq g_2$, then $\#\mathcal{S}(4,g_1)\leq\#\mathcal{S}(4,g_2)$. To this end, we will make use of the following result.

\begin{lemma}[{\cite[Lemma 2.2]{karakas}}]
Let $m\in\mathbb{N}$, $m>1$, and let $k=(k_1,\ldots,k_{m-1})\in\mathbb{N}^{m-1}$. Assume that
$$m^*=\max\big\{mk_i+i\mid\ 1\leq i\leq m-1\big\}=k_{i^*}m+i^*.$$
Then $k$ is the Kunz vector of a numerical semigroup with multiplicity $m$ 
if and only if
$$k_i+k_j-k_{(i+j)\bmod m}\geq
\left\{
\begin{array}{cl}
0, & \mbox{if } \; 1\leq i+j < m,\\
-1, & \mbox{if } \; i+j > m,
\end{array}
\right.$$
for all $i,j\in\{1,\ldots,m-1\}\setminus\{i^*\}$.
\end{lemma}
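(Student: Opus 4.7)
The plan is to leverage the Kunz polytope characterization recalled earlier in the paper: $(k_1,\ldots,k_{m-1}) \in \mathbb{N}^{m-1}$ is the Kunz vector of a numerical semigroup with multiplicity $m$ if and only if
\[
k_i + k_j - k_{(i+j) \bmod m} \;\geq\; \begin{cases} 0 & \text{if } 1 \leq i+j < m, \\ -1 & \text{if } i+j > m, \end{cases}
\]
holds for every $1 \leq i \leq j \leq m-1$ with $i+j\neq m$. The forward implication of the lemma is then immediate, since any Kunz vector satisfies in particular the subset of inequalities indexed by pairs with both entries distinct from $i^*$.

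For the reverse implication, the task is to show that the ``missing'' inequalities (those in which $i=i^*$ or $j=i^*$) are automatic consequences of the hypothesis that $m k_{i^*} + i^* = \max_{1\leq \ell \leq m-1}(m k_\ell + \ell)$. I would rewrite this maximality as $m(k_{i^*} - k_\ell) \geq \ell - i^*$ and combine it with the bound $|\ell - i^*| \leq m-2$ and the fact that $k_{i^*} - k_\ell \in \mathbb{Z}$, to obtain two clean integer inequalities: $k_{i^*} \geq k_\ell + 1$ whenever $\ell > i^*$, and $k_{i^*} \geq k_\ell$ whenever $\ell < i^*$.

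These rounded inequalities are tailor-made for the missing cases. For any $j \neq i^*$, either $i^* + j < m$, in which case $\ell := i^* + j > i^*$ gives $k_{i^*} + k_j - k_\ell \geq (k_\ell + 1) + k_j - k_\ell = k_j + 1 \geq 0$; or $i^* + j > m$, in which case $\ell := i^* + j - m < i^*$ gives $k_{i^*} + k_j - k_\ell \geq k_j \geq -1$. The diagonal case $i = j = i^*$ is handled identically by splitting on $2 i^* < m$ versus $2 i^* > m$, and the case $i + j = m$ is excluded from the Kunz system and needs nothing. The main obstacle is essentially bookkeeping around the integer-rounding step: one has to verify carefully that $m(k_{i^*} - k_\ell) \geq 1$ forces the full unit gap $k_{i^*} \geq k_\ell + 1$, and that $m(k_{i^*} - k_\ell) \geq -(m-2)$ forces $k_{i^*} \geq k_\ell$ rather than $k_{i^*} \geq k_\ell - 1$. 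Once this is set up, the argument is a short case analysis rather than a genuine inequality calculation.
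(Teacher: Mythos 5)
The paper itself offers no proof of this lemma --- it is transcribed verbatim from \cite[Lemma 2.2]{karakas} --- so there is no internal argument to compare against; your plan of deriving it from the full Kunz-polytope characterization recalled in Section~\ref{sec:pre} is a reasonable self-contained route. The core mechanism is correct: maximality of $mk_{i^*}+i^*$ gives $m(k_{i^*}-k_\ell)\geq \ell-i^*$ for every $\ell$, and since $k_{i^*}-k_\ell\in\mathbb{Z}$ while $\ell-i^*\geq 1$ when $\ell>i^*$ and $\ell-i^*\geq-(m-2)>-m$ when $\ell<i^*$, the rounding to $k_{i^*}\geq k_\ell+1$ (for $\ell>i^*$) and $k_{i^*}\geq k_\ell$ (for $\ell<i^*$) is legitimate. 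Your case analysis then recovers every omitted inequality: for $i^*+j<m$ the target index $i^*+j$ exceeds $i^*$, for $i^*+j>m$ the target index $i^*+j-m$ is below $i^*$, the diagonal case $i=j=i^*$ splits the same way, and $i+j=m$ never occurs in the Kunz system. The uniqueness of $i^*$ is also unproblematic, since the numbers $mk_i+i$ lie in distinct residue classes modulo $m$.

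There is, however, one genuine loose end. The characterization you invoke is not quite the one stated in the paper: the Kunz system also contains the constraints $k_i\geq 1$ for all $i\in\{1,\dots,m-1\}$, which you silently drop. For the forward implication this is harmless, but in the reverse implication you conclude that $k$ lies in the Kunz polytope from the two families of inequalities alone, and that is not sufficient: the zero vector satisfies every inequality of the form $k_i+k_j-k_{(i+j)\bmod m}\geq 0$ or $\geq -1$, yet it is the Kunz vector of $\mathbb{N}$, whose multiplicity is $1$, not $m$. The positivity constraints cannot be derived from the maximality hypothesis or from the remaining inequalities, so they must be carried as a (tacit) hypothesis of the lemma --- that is, $k\in\mathbb{N}^{m-1}$ has to be read with $k_i\geq 1$, consistently with how Kunz coordinates are introduced in the paper ($w(i)=k_im+i$ with $k_i$ a \emph{positive} integer). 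Your write-up should state this explicitly and check it in the reverse direction; otherwise the argument only proves membership in a strictly larger polytope.
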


Now we study these equations with multiplicity $m=4$ and genus $g$. We distinguish three cases.

\begin{description}
	\item[Case 1] $m^*=4k_1+1$. Then we have the following equations:
	\begin{equation}
		\left\{
		\begin{array}{c}
		k_1+k_2+k_3=g, \\
		k_1,k_2,k_3\geq 1, \\
		k_2+k_3-k_1\geq -1, \\
		2k_3-k_2\geq -1, \\
		k_1>k_2, \\
		k_1>k_3. \\
		\end{array}
		\right.
	\end{equation}
	\item[Case 2] $m^*=4k_2+2$. In this case, the equations are:
	\begin{equation}
		\left\{
		\begin{array}{c}
		k_1+k_2+k_3=g, \\
		k_1,k_2,k_3\geq 1, \\
		2k_1-k_2\geq 0, \\
		2k_3-k_2\geq -1, \\
		k_2\geq k_1, \\
		k_2>k_3. \\
		\end{array}
		\right.
	\end{equation}
	\item[Case 3] $m^*=4k_3+3$. Under these conditions, the equations become:
	\begin{equation}
		\left\{
		\begin{array}{c}
		k_1+k_2+k_3=g, \\
		k_1,k_2,k_3\geq 1, \\
		k_1+k_2-k_3\geq 0, \\
		2k_1-k_2\geq 0, \\
		k_3\geq k_1, \\
		k_3\geq k_2. \\
		\end{array}
		\right.
	\end{equation}
\end{description}

By counting the number of solutions of these systems, using \texttt{barvinok} \cite{barvinok}, we get the following results.

\begin{proposition}\label{prop:count-4-f-1}
The number of numerical semigroups with multiplicity four and Frobenius number congruent with one modulo four is
\begin{itemize}
\item $1$, if $g=4$;
\item $\left(\frac{11}{4}-\frac{g}{2}\right) \left\lfloor \frac{g}{2} \right\rfloor + (g-8) \left\lfloor \frac{g+1}{3} \right\rfloor + \frac{g^2}{8} - \frac{9 g}{8}+9$, if $\ 5\leq g\leq 8$;
\item $5$, if $g=9$;
\item $\left\lfloor \frac{g+1}{5} \right\rfloor^2 - \frac{3}{2} \left\lfloor \frac{g+1}{3} \right\rfloor^2 - \frac{3}{2} \left\lfloor \frac{g+2}{6} \right\rfloor^2 - \left\lfloor \frac{2 g+4}{5} \right\rfloor^2 + \left(\frac{5}{4}-\frac{g}{2}\right) \left\lfloor \frac{g}{2} \right\rfloor + \left\lfloor \frac{g+1}{5} \right\rfloor + \left(g-\frac{1}{2}\right)
   \left\lfloor \frac{g+1}{3} \right\rfloor + \left(\left\lfloor \frac{g}{2} \right\rfloor -\frac{1}{2}\right) \left\lfloor \frac{g+2}{6} \right\rfloor + \left(-\left\lfloor \frac{g+1}{5} \right\rfloor + g + 1\right) \left\lfloor \frac{2 g+4}{5} \right\rfloor - \frac{g^2}{8} - \frac{7 g}{8}$, if $g \geq 10$.
\end{itemize}
\end{proposition}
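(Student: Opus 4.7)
The plan is to prove this by counting integer points in the polytope defined by the Case~1 inequalities. By the lemma preceding Case~1, those integer solutions biject with semigroups in $\mathcal{S}(4,g)$ whose Apéry set satisfies $\max\Ap(S,4) = 4k_1+1$, and since $\mathrm{F}(S) = \max\Ap(S,4) - 4$, this is precisely the condition $\mathrm{F}(S) \equiv 1 \pmod 4$. So the quantity in the proposition equals the number of integer points in the Case~1 polytope.

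First I would eliminate $k_3$ via $k_3 = g - k_1 - k_2$, reducing the Case~1 system to a two-dimensional rational polytope in $(k_1, k_2)$ cut out by
\[
k_2 \geq 1,\ \ k_1 \geq k_2 + 1,\ \ 2k_1 + k_2 \geq g+1,\ \ k_1 + k_2 \leq g-1,\ \ 2k_1 + 3k_2 \leq 2g+1,\ \ 2k_1 \leq g+1,
\]
where the last inequality comes from $k_2 + k_3 - k_1 \geq -1$. For each fixed $k_1$ in the admissible range, the feasible $k_2$'s form an integer interval whose endpoints are piecewise-linear in $g$ and $k_1$; summing its length over $k_1$ yields a quasi-polynomial in $g$ whose period divides $30 = \mathrm{lcm}(2,3,5,6)$. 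The floor terms $\lfloor g/2\rfloor$, $\lfloor (g+1)/3\rfloor$, $\lfloor (g+1)/5\rfloor$, $\lfloor (g+2)/6\rfloor$ and $\lfloor (2g+4)/5\rfloor$ appearing in the proposition arise exactly from the vertices of this polytope, whose coordinates are of the form $(ag+b)/d$ with $d \in \{2,3,5,6\}$. For $g \geq 10$ the polytope realizes its generic combinatorial type (six vertices in general position) and the third expression of the proposition is obtained by collecting the summations contributed by each edge.

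The exceptional values $g=4$, $5\leq g\leq 8$, and $g=9$ correspond to degenerations of the polytope at small $g$ (coincident vertices, collapsed edges, or vacuous inequalities). They are handled by direct enumeration: for instance, $g=4$ forces $(k_1,k_2,k_3)=(2,1,1)$, confirming the value $1$. The main obstacle is not conceptual but rather the bookkeeping of floor-function identities in the generic summation; in practice one produces the quasi-polynomial mechanically via Barvinok's algorithm \cite{barvinok}, and the proof reduces to verifying its output on enough consecutive values of $g$ to pin down the (at most quadratic) polynomial on each residue class modulo $30$, together with the six small cases done by hand.
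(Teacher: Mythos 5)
Your proposal is correct and follows essentially the same route as the paper: both identify the semigroups with $\mathrm{F}(S)\equiv 1\pmod 4$ with the lattice points of the Case~1 slice of the Kunz polytope for $m=4$ (via $\mathrm{F}(S)=\max\Ap(S,4)-4$) and then obtain the quasi-polynomial mechanically with \texttt{barvinok}, the small values of $g$ being the chambers where the parametric polytope degenerates. The extra discussion of the two-dimensional reduction and the vertex denominators is consistent elaboration rather than a different argument.
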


The code used in Proposition \ref{prop:count-4-f-1} is
\begin{verbatim}
P := [g] -> { [x,y,z] : 
                x+y+z=g and x>=1 and y>=1 and z>=1 and
                y+z-x>=-1 and 2*z-y>=-1 and
                x>y and x>z
             };
card P;
\end{verbatim}

\begin{proposition}\label{prop:count-4-f-2}
The number of numerical semigroups with multiplicity four and Frobenius number congruent with two modulo four is
\begin{itemize}
\item 1, if $g=4$;
\item 1, if $g=5$;
\item $(7-g) \left\lfloor \frac{1}{5} (2 g+1) \right\rfloor - \frac{g^2}{4} + \frac{25 g}{4}-\frac{59}{2}$, if $\;6\leq g\leq 7$;
\item $-\left\lfloor \frac{g}{5}\right\rfloor^2 + \left\lfloor \frac{g}{4} \right\rfloor^2 - \frac{3}{2} \left\lfloor \frac{g}{3} \right\rfloor^2 + \left\lfloor \frac{2 g}{5} \right\rfloor^2 - \left\lfloor \frac{g+2}{5} \right\rfloor^2 + \left\lfloor \frac{g+2}{4} \right\rfloor^2 + \left\lfloor \frac{2 g+1}{5} \right\rfloor^2 - \left\lfloor \frac{g}{5} \right\rfloor + \left(g-\frac{3}{2}\right) \left\lfloor \frac{g}{3} \right\rfloor + (1-g) \left\lfloor \frac{2 g}{5} \right\rfloor +$ 

$\left\lfloor \frac{g}{4} \right\rfloor \left(1-\left\lfloor \frac{g}{2} \right\rfloor \right) + \left(\frac{g}{2}-\frac{1}{4}\right) \left\lfloor \frac{g}{2} \right\rfloor + \left\lfloor \frac{2 g}{5} \right\rfloor \left\lfloor \frac{g+2}{5} \right\rfloor - \left\lfloor \frac{g}{2} \right\rfloor \left\lfloor \frac{g+2}{4} \right\rfloor + \left(\left\lfloor \frac{g}{5} \right\rfloor - g + 1\right) \left\lfloor \frac{2 g+1}{5} \right\rfloor + \frac{g^2}{8} - \frac{g}{8}$, if $g\ge 8$.
\end{itemize}
\end{proposition}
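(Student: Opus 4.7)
The plan is to count the integer points in the polytope described by Case 2, parametrized by the genus $g$. Concretely, I want to enumerate the nonnegative integer solutions to
\[
k_1+k_2+k_3=g,\quad k_1,k_2,k_3\ge 1,\quad 2k_1-k_2\ge 0,\quad 2k_3-k_2\ge -1,\quad k_2\ge k_1,\quad k_2>k_3,
\]
which, by the Lemma of Karakaş quoted above, is in bijection with the numerical semigroups of multiplicity four whose Frobenius number is congruent to $2$ modulo $4$. Once $k_2$ is eliminated using $k_2=g-k_1-k_3$, the problem reduces to counting lattice points in a two-dimensional polygon whose edges depend linearly on $g$.

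To carry this out, I would feed the above system directly into the \texttt{barvinok} package, exactly in the style already used for Proposition \ref{prop:count-4-f-1}, namely
\begin{verbatim}
P := [g] -> { [x,y,z] :
                x+y+z=g and x>=1 and y>=1 and z>=1 and
                2*x-y>=0 and 2*z-y>=-1 and
                y>=x and y>z
             };
card P;
\end{verbatim}
Barvinok's algorithm returns a quasi-polynomial in $g$ (piecewise polynomial with coefficients that are periodic functions of $g$ through floor functions), which is the formula stated in the proposition for large $g$. This quasi-polynomial is an exact answer, not an asymptotic one, as soon as $g$ is past the threshold where the defining polygon has its ``generic'' combinatorial type.

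The main obstacle, as in Proposition \ref{prop:count-4-f-1}, is that for small values of $g$ some of the defining inequalities become degenerate (for instance when a vertex crosses an integrality boundary, or when two facets coincide), so the single quasi-polynomial formula is not valid there. These low-genus exceptions are precisely the cases $g\in\{4,5,6,7\}$ listed separately in the statement. To finish the proof I would verify these four values by a direct enumeration of the integer solutions to the system (easy by hand, or by querying \texttt{NumericalSemigroupsWithGenus} as in the preliminaries), and separately record the intermediate quasi-polynomial that \texttt{barvinok} reports on the chambers $g=6,7$ versus the chamber $g\ge 8$.

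The rest is bookkeeping: one checks that on $g\ge 8$ the formula returned by \texttt{barvinok} coincides with the long expression in the statement, and that the special values at $g=4,5$ and the two-point polynomial on $g\in\{6,7\}$ agree with direct enumeration. No genuine mathematical difficulty arises beyond the trust placed in Barvinok's algorithm; the entire content of the proposition is the output of the linear integer programming computation applied to the Case 2 system.
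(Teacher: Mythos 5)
Your proposal is correct and follows exactly the route the paper takes: the paper's entire justification for this proposition is the \texttt{barvinok} computation applied to the Case 2 system (the verbatim code you reproduce is the code printed immediately after the statement), with the small-genus chambers handled as separate cases of the quasi-polynomial output. Nothing further is needed.
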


The code used to obtain Proposition \ref{prop:count-4-f-2} is
\begin{verbatim}
P := [g] -> { [x,y,z] : 
                x+y+z=g and x>=1 and y>=1 and z>=1 and
                2*x-y>=0 and 2*z-y>=-1 and
                y>=x and y>z
             };
card P;
\end{verbatim}

\begin{proposition}\label{prop:count-4-f-3}
The number of numerical semigroups with multiplicity four and Frobenius number congruent with three modulo four is
\begin{itemize}
\item 1, if $g=3$;
\item $(-21/2 + 35/8 g - 3/8 g^2)$, if $\ 4\leq g\leq 5$;
\item 3, if $g=6$;
\item $\left\lfloor \frac{g}{5} \right\rfloor^2 - \left\lfloor \frac{2 g}{5} \right\rfloor^2 - \frac{3}{2} \left\lfloor \frac{g+2}{3} \right\rfloor^2 - \frac{3}{2}\left\lfloor \frac{g+5}{6} \right\rfloor^2 + \left\lfloor \frac{g}{5} \right\rfloor + \left(-\left\lfloor \frac{g}{5}\right\rfloor +g-1\right) \left\lfloor \frac{2 g}{5} \right\rfloor  + \left(\frac{g}{2}-\frac{3}{4}\right) \left\lfloor \frac{g}{2} \right\rfloor + \left(g + \frac{1}{2}\right) \left\lfloor \frac{g+2}{3} \right\rfloor +$ 

$\left(-\left\lfloor \frac{g}{2} \right\rfloor + g + \frac{1}{2}\right) \left\lfloor \frac{g+5}{6}\right\rfloor - \frac{5 g^2}{8} + \frac{5 g}{8}$, if $g\geq 7$.
\end{itemize}
\end{proposition}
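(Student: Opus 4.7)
The plan is to apply the same lattice-point counting strategy that produced Propositions \ref{prop:count-4-f-1} and \ref{prop:count-4-f-2}, now for the Case 3 polytope. Using the equation $k_1+k_2+k_3=g$ to eliminate $k_3$, I reduce to counting integer pairs $(k_1,k_2)$ in the two-dimensional polygon $P(g)\subset\mathbb{R}^2$ cut out by $k_1\ge1$, $k_2\ge1$, $k_1+k_2\le g-1$, $k_2\le 2k_1$, $2k_1+k_2\le g$ (from $k_3\ge k_1$), and $k_1+2k_2\le g$ (from $k_3\ge k_2$). This is a rational polygon whose vertices are intersections of pairs of these lines; a direct computation locates them at coordinates involving $g/2$, $g/3$, $g/4$ and $g/5$, which foreshadows the floor functions that appear in the stated formula.

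I would then count $\#(P(g)\cap\mathbb{Z}^2)$ by iterated summation. Fix $k_1$ and compute the admissible range for $k_2$: the constraints $k_2\le 2k_1$, $k_2\le g-2k_1$, $k_2\le\lfloor (g-k_1)/2\rfloor$ give the upper bound, while $k_2\ge\max(1,g-2k_1-\text{something})$ gives the lower bound. As $k_1$ varies over $\{1,2,\dots\}$, the active upper constraint changes at the thresholds $k_1=g/4$, $k_1=g/3$ and $k_1=2g/5$ (where the lines $k_2=2k_1$, $k_2=(g-k_1)/2$ and $k_2=g-2k_1$ cross each other and $k_1$). I therefore partition the sum over $k_1$ into at most four sub-ranges and evaluate each as a sum of arithmetic or floor-based expressions, collecting terms at the end; this accounts for the $\lfloor g/5\rfloor$, $\lfloor 2g/5\rfloor$, $\lfloor (g+2)/3\rfloor$, $\lfloor g/2\rfloor$, $\lfloor (g+5)/6\rfloor$ summands in the stated closed form. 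The exceptional small genera $g\in\{3,4,5,6\}$ must be checked by hand because the sub-range endpoints collide or the polygon degenerates; this is where the piecewise statement comes from.

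The clean way to organize the final bookkeeping is to observe that $\#(P(g)\cap\mathbb{Z}^2)$ is a quasi-polynomial in $g$ of degree $2$ with period $\operatorname{lcm}(2,3,4,5,6)=60$, by Ehrhart reciprocity for rational polytopes; the closed form then follows from interpolating on finitely many residue classes and verifying the universal formula against the values one computes directly from the polytope. This is exactly the computation performed by Barvinok's algorithm on the polytope $P(g)$ (the same \texttt{barvinok} code that produced the preceding two propositions applies, with $x>y$ and $y\ge x$ replaced by $z\ge x$ and $z\ge y$), so a fully mechanized proof reduces to running the algorithm on the Case 3 system and verifying the output.

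The main obstacle is not conceptual but combinatorial: a naive hand derivation requires tracking which of the upper-bound constraints is tight over each sub-interval of $k_1$ and which residues of $g$ modulo $2,3,4,5,6$ tip the floor functions, leading to a proliferation of subcases. Delegating this bookkeeping to an Ehrhart-quasi-polynomial computation (Barvinok) avoids this explosion, at the cost of making the proof a verification rather than a derivation.
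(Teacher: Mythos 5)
Your proposal follows essentially the same route as the paper: the paper's entire ``proof'' of this proposition is to feed the Case 3 system $k_1+k_2+k_3=g$, $k_i\ge 1$, $k_1+k_2-k_3\ge 0$, $2k_1-k_2\ge 0$, $k_3\ge k_1$, $k_3\ge k_2$ into \texttt{barvinok} and report the resulting Ehrhart-type quasi-polynomial (with the small genera handled as exceptional chambers), which is exactly what you describe. One minor slip: in your explicit description of the two-dimensional polygon after eliminating $k_3$ you omit the constraint coming from $k_1+k_2-k_3\ge 0$ (i.e.\ $2(k_1+k_2)\ge g$), and ``Ehrhart reciprocity'' should be the parametric Ehrhart quasi-polynomiality theorem, but since your argument ultimately defers to running the full Case 3 system through the software, neither point affects the conclusion.
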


The code for this case is

\begin{verbatim}
P := [g] -> { [x,y,z] : 
                x+y+z=g and x>=1 and y>=1 and z>=1 and
                x+y-z>=0 and 2*x-y>=0 and
                z>=x and z>=y
             };
card P;
\end{verbatim}

By Propositions~\ref{prop:count-4-f-1}, \ref{prop:count-4-f-2} and \ref{prop:count-4-f-3}, we obtain the next result.

\sloppy

\begin{theorem}\label{th:ngs-4}
Let $g$ be an integer greater than nine. Then $\#\mathcal{S}(4,g)=\left\lfloor \frac{g}{4} \right\rfloor^2 - \frac{3}{2} \left\lfloor \frac{g}{3} \right\rfloor^2 + \left\lfloor \frac{g+1}{5} \right\rfloor^2 - \frac{3}{2} \left\lfloor \frac{g+1}{3} \right\rfloor^2 - \frac{3}{2} \left\lfloor \frac{g+2}{6} \right\rfloor^2 - \left\lfloor \frac{g+2}{5} \right\rfloor ^2 + \left\lfloor \frac{g+2}{4} \right\rfloor^2 - \frac{3}{2} \left\lfloor \frac{g+2}{3} \right\rfloor^2 - \frac{3}{2} \left\lfloor \frac{g+5}{6} \right\rfloor^2 + \left\lfloor \frac{2 g+1}{5} \right\rfloor^2 - \left\lfloor \frac{2g+4}{5} \right\rfloor^2 + \left(g-\frac{3}{2} \right) \left\lfloor \frac{g}{3} \right\rfloor -\left\lfloor \frac{g}{5} \right\rfloor \left\lfloor \frac{2 g}{5} \right\rfloor + \left\lfloor \frac{g}{4}\right\rfloor \left(1-\left\lfloor \frac{g}{2} \right\rfloor \right) + \left(\frac{1}{4} + \frac{g}{2}\right) \left\lfloor \frac{g}{2}\right\rfloor + \left\lfloor \frac{g+1}{5} \right\rfloor + \left(g-\frac{1}{2}\right) \left\lfloor \frac{g+1}{3} \right\rfloor + \left(\left\lfloor \frac{g}{2} \right\rfloor - \frac{1}{2}\right) \left\lfloor \frac{g+2}{6} \right\rfloor + \left\lfloor \frac{2 g}{5} \right\rfloor \left\lfloor \frac{g+2}{5} \right\rfloor - \left\lfloor \frac{g}{2} \right\rfloor \left\lfloor \frac{g+2}{4} \right\rfloor + \left(g + \frac{1}{2}\right) \left\lfloor \frac{g+2}{3} \right\rfloor + \left(-\left\lfloor \frac{g}{2} \right\rfloor + g + \frac{1}{2}\right) \left\lfloor \frac{g+5}{6} \right\rfloor + \left(\left\lfloor \frac{g}{5} \right\rfloor - g + 1\right) \left\lfloor \frac{2 g+1}{5} \right\rfloor + \left(-\left\lfloor \frac{g+1}{5} \right\rfloor + g + 1\right) \left\lfloor \frac{2 g+4}{5} \right\rfloor -\frac{5 g^2}{8} - \frac{3 g}{8}$.
\end{theorem}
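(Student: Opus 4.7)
The plan is to combine the three preceding propositions via a suitable partition of $\mathcal{S}(4,g)$. A numerical semigroup $S$ of multiplicity four cannot have $\mathrm{F}(S)$ divisible by $4$, because $4\in S$ and $\mathrm{F}(S)\equiv 0\pmod 4$ would force $\mathrm{F}(S)\in S$, contradicting the definition of the Frobenius number. Hence $\mathrm{F}(S)\bmod 4\in\{1,2,3\}$, and this yields a partition of $\mathcal{S}(4,g)$ into three disjoint classes.

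First I would match this partition with the three cases of the preceding lemma. For Kunz coordinates $(k_1,k_2,k_3)$, the three candidate maxima $4k_1+1$, $4k_2+2$, $4k_3+3$ lie in three different residue classes modulo $4$, so the maximum $m^{*}$ is achieved by a unique index $i^{*}\in\{1,2,3\}$. Since $\mathrm{F}(S)=m^{*}-4$, we have $i^{*}\equiv\mathrm{F}(S)\pmod 4$. Checking the three systems of inequalities confirms that Case~1 (with $k_1>k_2$ and $k_1>k_3$) is exactly the set where $4k_1+1$ is the strict maximum, and analogously for Cases~2 and~3. Thus Proposition~\ref{prop:count-4-f-1} counts the semigroups with $\mathrm{F}(S)\equiv 1\pmod 4$, Proposition~\ref{prop:count-4-f-2} those with $\mathrm{F}(S)\equiv 2\pmod 4$, and Proposition~\ref{prop:count-4-f-3} those with $\mathrm{F}(S)\equiv 3\pmod 4$.

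For $g\ge 10$ all three propositions are in their generic regime (the respective thresholds $g\ge 10$, $g\ge 8$, $g\ge 7$ are all met). Writing $N_i(g)$ for the $i$-th count, the partition gives
\[
\#\mathcal{S}(4,g)=N_1(g)+N_2(g)+N_3(g),
\]
and the theorem is then the explicit evaluation of the right-hand side.

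The only obstacle is the algebraic verification that $N_1(g)+N_2(g)+N_3(g)$ collapses to the claimed expression. This is a routine but tedious bookkeeping: one groups the terms by type (squared floors $\lfloor(ag+b)/c\rfloor^{2}$, products of two distinct floors, $g$-multiples of single floors, and the pure polynomial tail in $g$) and collects coefficients across the three propositions. For instance, the $\lfloor g/5\rfloor^{2}$ terms from Cases~2 and~3 cancel, the $\lfloor 2g/5\rfloor^{2}$ terms similarly cancel, and the polynomial part aggregates as $(-\tfrac{1}{8}+\tfrac{1}{8}-\tfrac{5}{8})g^{2}+(-\tfrac{7}{8}-\tfrac{1}{8}+\tfrac{5}{8})g=-\tfrac{5g^{2}}{8}-\tfrac{3g}{8}$, matching the statement. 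No further idea is required; the remaining verification is purely mechanical summation.
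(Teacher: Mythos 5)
Your proposal is correct and takes essentially the same route as the paper: the paper's proof consists precisely of summing Propositions~\ref{prop:count-4-f-1}, \ref{prop:count-4-f-2} and \ref{prop:count-4-f-3} (which are already organized by the residue of the Frobenius number modulo four, matching the three cases of the lemma), valid for $g\ge 10$ since all three formulas are then in their generic regime. Your added justification that the three cases are disjoint and exhaustive via $\mathrm{F}(S)=m^*-4$, and your spot-checks of the cancellations and of the polynomial tail, are consistent with the stated formulas.
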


\fussy

It is easy to check that for $g\ge 10$, $\#\mathcal{S}(4,g)$ is a function in $g$ that verifies Lemma \ref{creciente} with $n=60$. The values of $\#\mathcal{S}(4,g)$ for $g\in\{0,\ldots,9\}$ can be obtained as follows.

\begin{verbatim}
gap> List([0..9], g->Length(Filtered(NumericalSemigroupsWithGenus(g), 
        s->Multiplicity(s)=4)));
[ 0, 0, 0, 1, 3, 4, 6, 7, 9, 11 ]
\end{verbatim}

Therefore we have the following consequence.

\begin{corollary}
Let $g$ be a nonnegative integer. Then $\#\mathcal{S}(4,g)\leq \#\mathcal{S}(4,g+1)$.
\end{corollary}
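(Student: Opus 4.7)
The plan is to combine Theorem~\ref{th:ngs-4}, which gives an explicit closed-form expression for $\#\mathcal{S}(4,g)$ valid for $g\geq 10$, with Lemma~\ref{creciente} to reduce the monotonicity statement to finitely many numerical inequalities. The values for $g\in\{0,\ldots,9\}$ are covered directly by the tabulated computation $[0,0,0,1,3,4,6,7,9,11]$ displayed in the \texttt{gap} session just before the corollary, which is manifestly nondecreasing and furthermore satisfies $\#\mathcal{S}(4,9)=11\leq \#\mathcal{S}(4,10)$ once the latter is computed from the formula (or from the same \texttt{gap} session extended by one index).

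For $g\geq 10$, I would first observe that every floor appearing in the formula of Theorem~\ref{th:ngs-4} has denominator in $\{2,3,4,5,6\}$. Since $\mathrm{lcm}(2,3,4,5,6)=60$, each expression $\lfloor (ag+b)/d\rfloor$ with $d\mid 60$ has the property that $\lfloor (a(g+60)+b)/d\rfloor - \lfloor (ag+b)/d\rfloor$ is a constant (namely $60a/d$) independent of $g$. Consequently the function $g\mapsto \#\mathcal{S}(4,g)$, restricted to residue classes modulo $60$, agrees with a quasi-polynomial in $g$ of period dividing $60$. This is precisely the setting in which Lemma~\ref{creciente} applies with $n=60$: writing $f(g)=\#\mathcal{S}(4,g)$ and $f_i(g)=f(g)$ when $g\equiv i\pmod{60}$, the lemma reduces the assertion $f(g)\leq f(g+1)$ for all $g\geq 10$ to the verification that $f(60k)\leq f(60k+1)\leq \cdots \leq f(60(k+1))$ for some (hence every) sufficiently large $k$.

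The main obstacle, and the only real work, is the explicit verification of these $60$ consecutive inequalities. I would choose one concrete value $k_0$ with $60 k_0 \geq 10$, say $k_0=1$, substitute $g=60+j$ for $j=0,1,\ldots,60$ into the expression in Theorem~\ref{th:ngs-4}, and check that the resulting integer sequence of length $61$ is weakly increasing. At $g=60+j$ every floor reduces to an integer by the choice of $k_0$, so each $f(60+j)$ is a concrete integer obtained by evaluating the closed formula; this is a finite, purely arithmetic check (suited to a computer algebra system, consistent with the computational flavour of the paper). Once this is done, Lemma~\ref{creciente} immediately yields the monotonicity for all $g\geq 60$, and a direct comparison with the previously computed values $f(10),f(11),\ldots,f(60)$ (obtained by plugging $g=10,\ldots,60$ into the same formula) closes the gap down to $g=10$.

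Finally I would stitch the two ranges together: the tabulated values give monotonicity on $\{0,1,\ldots,9\}$ and the comparison $f(9)\leq f(10)$, while the argument above gives monotonicity on $\{10,11,12,\ldots\}$. Together these establish $\#\mathcal{S}(4,g)\leq \#\mathcal{S}(4,g+1)$ for every nonnegative integer $g$, proving the corollary.
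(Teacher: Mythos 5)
Your overall strategy coincides with the paper's: dispose of $g\le 9$ via the tabulated values $[0,0,0,1,3,4,6,7,9,11]$, and for $g\ge 10$ combine the closed formula of Theorem~\ref{th:ngs-4} (a quasi-polynomial whose floors all have denominators dividing $60$) with Lemma~\ref{creciente} for $n=60$. The gap lies in how you propose to verify the hypothesis of Lemma~\ref{creciente}. That hypothesis is the chain $f(kn)\le f(kn+1)\le\dots\le f((k+1)n)$ for \emph{every} $k$ in the relevant range --- this is exactly what its proof consumes when it iterates $f(kn)\le f((k+1)n)$ --- whereas you check it only at the single value $k_0=1$ and assert that this holds ``for some (hence every) sufficiently large $k$''. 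That implication is false in general: once $g$ is restricted to a fixed residue class modulo $60$, each $f_i$ is a non-constant (quadratic) polynomial in $g$, so a difference such as $f_{i+1}(60k+i+1)-f_i(60k+i)$ is a polynomial in $k$ of positive degree, and its positivity at $k=1$ says nothing about its sign for larger $k$ (a linear expression like $-98k+C$ can be positive at $k=1$ and negative afterwards). A single numerical sweep over $g=60,\dots,120$ therefore establishes nothing beyond $g=120$.

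To close the gap you must make the check uniform in $k$: for each residue class write the difference explicitly as a polynomial in $k$ (every floor becomes an exact linear expression in $k$ on a fixed class modulo $60$) and verify symbolically that it is nonnegative for all $k$ in range --- this is precisely what the paper does for multiplicity five with \texttt{FullSimplify[dif[[i]] > 0, k > 0]}. Since each difference has degree at most two in $k$, an acceptable finite substitute is to determine it from three sample values of $k$ and then analyse its sign for all admissible $k$; but one sample point is not enough. The remainder of your argument (the table for $0\le g\le 9$, the bridge $f(9)\le f(10)$, and the direct evaluations for $10\le g\le 60$) is sound.
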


The functions given in Propositions \ref{prop:count-4-f-1} to \ref{prop:count-4-f-3} are not monotone, but as the last corollary says, the sum of the three is nondecreasing.

If one looks for the sequence $\#\mathcal{S}(4,g)$ with $g\in\mathbb{N}$ in The On-Line Encyclopedia of Integer Sequences \cite{enciclopedia-enteros}, one finds that it is related to an special kind of integer partitions. Next, we will explain this connection.

Let $n$ be a nonnegative integer and $k$ be a positive integer. An \emph{unordered partition} of $n$ in $k$ parts is a sequence $(p_1,\ldots,p_k)$ such that $p_1\le p_2\le \dots \le p_k$ and $p_1 + \dots+p_k=n$.

We show that the number of semigroups with multiplicity four and genus $g$ is precisely the number of partitions of $n=g+6$ into three parts such that every $i^{th}$ part is different from $i$. That is, the number of integer solutions of 
\[
\left\{\begin{array}{rcl}
    x+y+z &   =  & n, \\
        x & \geq & 2, \\
        y & \geq & 3, \\
        z & \geq & 4, \\
        y & \geq & x, \\
        z & \geq & y. \\
\end{array}\right.
\]
Using again \texttt{barvinok} with
\begin{verbatim}
P := [n] -> { [x,y,z] : 
                x+y+z=n and x>=2 and y>=3 and z>=4
                and x<=y and y<=z
            };
card P;
\end{verbatim}
we obtain that the number of solutions is
\[
\left\{\begin{array}{cl}
 1, & \mbox{if }\, n=9,  \\
 (n-12) + \lfloor\frac{n}{2}\rfloor, & \mbox{if }\, 10\leq n\leq 12,\\
 \frac{3}{2} \left\lfloor \frac{n+2}{3} \right\rfloor^2 + \frac{1}{2} \left\lfloor \frac{n}{2} \right\rfloor + \left(-n-\frac{3}{2}\right) \left\lfloor \frac{n+2}{3} \right\rfloor + \frac{n^2}{4} - \frac{n}{4}, & \mbox{if }\, n\geq 13.
\end{array}\right. 
\]

Therefore, if $n\geq 13$, the cardinal of the set of the solutions can be seen as a function $f(n)=f_i(n)$ with $n\equiv i \bmod 6$ where the functions $f_i(n)$ are: $f_0(n)=-3 n + 3 n^2$, $f_1(n)=-1 - 2 n + 3 n^2$, $f_2(n)=-1 - n + 3 n^2$, $f_3(n)=-1 + 3 n^2$, $f_4(n)=-1 + n + 3 n^2$ and $f_5(n)=-1 + 2 n + 3 n^2$. If we use these auxiliary functions together with the splitting in 60 functions of the expression in Theorem \ref{th:ngs-4}, the computation $f(n)-\#\mathcal{S}(4,n-6)$ yields 0. Hence, we get the following corollary.

\begin{corollary}
The number of semigroups with multiplicity four and genus $g$ is the same that the number of unordered partitions of $g+6$ into three parts such that every $i^{th}$ part is different from $i$.
\end{corollary}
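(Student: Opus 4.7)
The plan is to establish the corollary by a direct comparison of the two explicit quasi-polynomial formulas already in hand, rather than by constructing a combinatorial bijection. On the semigroup side, Theorem~\ref{th:ngs-4} gives $\#\mathcal{S}(4,g)$ as a closed expression in $g$ for $g\ge 10$ whose only nonlinear ingredients are floor functions with denominators $2,3,4,5,6$; since $\mathrm{lcm}(2,3,4,5,6)=60$, this expression is a quadratic polynomial in $g$ on each residue class modulo $60$.

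First I would set up the partition count as an integer-programming problem. Writing $n=g+6$, unordered partitions $(x,y,z)$ with $x\le y\le z$, $x+y+z=n$, and $x\ne 1$, $y\ne 2$, $z\ne 3$ are exactly the integer points of the polytope
\[
\{(x,y,z) : x+y+z=n,\ x\ge 2,\ y\ge 3,\ z\ge 4,\ x\le y,\ y\le z\}.
\]
Running \texttt{barvinok} on this polytope yields a piecewise quasi-polynomial in $n$, with three regimes ($n=9$; $10\le n\le 12$; $n\ge 13$). In the $n\ge 13$ regime the count is quadratic in $n$ with coefficients depending on $n\bmod 6$; these are precisely the six auxiliary functions $f_0,\ldots,f_5$ recorded just before the corollary.

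The verification step is then mechanical. I would substitute $g=n-6$ in Theorem~\ref{th:ngs-4}, split into the $60$ residue classes modulo $60$, and for each such class group it with the unique residue class modulo $6$ of $n$ that it refines; on each of the $60$ cells the two sides become honest quadratic polynomials in $n$, and I would verify that their difference is identically zero. This is what the paragraph preceding the statement already announces: \emph{the computation $f(n)-\#\mathcal{S}(4,n-6)$ yields $0$ on every residue class}.

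Finally I would dispatch the boundary by hand. The regimes $n=9$ and $10\le n\le 12$ of the partition formula correspond to $g=3$ and $4\le g\le 6$, and the semigroup formula in Theorem~\ref{th:ngs-4} is only stated for $g\ge 10$, so the cases $0\le g\le 9$ must be compared directly against the table $[0,0,0,1,3,4,6,7,9,11]$ obtained from \texttt{NumericalSgps} just after Theorem~\ref{th:ngs-4}, checking that the partition count of $g+6$ returns the same values. The main obstacle is purely bookkeeping: matching up the $60$ residues on the semigroup side with the $6$ residues on the partition side and carrying out $60$ polynomial identity checks; there is no conceptual difficulty beyond the observation that both quantities are Ehrhart quasi-polynomials of slices of rational polytopes and so such a finite check is in principle decisive.
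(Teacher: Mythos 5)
Your proposal is correct and follows essentially the same route as the paper: encode the restricted partitions of $n=g+6$ as lattice points of a polytope, compute the count with \texttt{barvinok}, and verify that the resulting quasi-polynomial (split by residues modulo $6$) agrees with the expression of Theorem~\ref{th:ngs-4} (split by residues modulo $60$) by checking that the difference vanishes on each cell. Your explicit treatment of the boundary cases $0\le g\le 9$ against the table $[0,0,0,1,3,4,6,7,9,11]$ is a welcome addition, since the paper only carries out the comparison in the generic regime.
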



\section{Other multiplicities} \label{sec:mul-5}

We can repeat the same procedure for other multiplicities. For example, for multiplicity five we get the following result.

\setlength\emergencystretch{\hsize}\hbadness=10000

\begin{proposition}\label{th:ngs-mult5}
If $g$ is an non negative number greater than 13, then $\#\mathcal{S}(5,g)$ equals
{\small $\frac{1215 g^3}{124}-\frac{1}{4} \left\lfloor \frac{g}{2}\right\rfloor  g^2-\frac{1771
   g^2}{124}-\left\lfloor \frac{1}{5} (2 g+2)\right\rfloor  g + \frac{201 g}{31} + \frac{3 \left\lfloor
   \frac{g}{7}\right\rfloor ^3}{2}-\frac{8 \left\lfloor \frac{g}{4}\right\rfloor ^3}{3}+3
   \left\lfloor \frac{2 g}{7}\right\rfloor ^3-\frac{13}{6} \left\lfloor \frac{3 g}{7}\right\rfloor
   ^3 + \frac{2}{3} \left\lfloor \frac{4 g}{9}\right\rfloor ^3-\frac{19}{22} \left\lfloor
   \frac{g+1}{7}\right\rfloor ^3-\frac{3}{2} \left\lfloor \frac{g+1}{6}\right\rfloor ^3-\frac{8}{3}
   \left\lfloor \frac{g+1}{4}\right\rfloor ^3-\frac{5}{3} \left\lfloor \frac{g+1}{3}\right\rfloor
   ^3 + \frac{7}{3} \left\lfloor \frac{g+2}{9}\right\rfloor ^3 + \frac{3}{2} \left\lfloor
   \frac{g+2}{7}\right\rfloor ^3 + \frac{3}{2} \left\lfloor \frac{g+2}{6}\right\rfloor ^3-\frac{2}{3}
   \left\lfloor \frac{g+2}{5}\right\rfloor ^3-\frac{8}{3} \left\lfloor \frac{g+2}{4}\right\rfloor
   ^3-\frac{1}{3} \left\lfloor \frac{g+2}{3}\right\rfloor ^3-\frac{17}{6} \left\lfloor
   \frac{g+3}{10}\right\rfloor ^3-\frac{7}{3} \left\lfloor \frac{g+3}{9}\right\rfloor ^3-\frac{7}{6}
   \left\lfloor \frac{g+3}{7}\right\rfloor ^3-\frac{4}{3} \left\lfloor \frac{g+3}{6}\right\rfloor
   ^3-\frac{1}{3} \left\lfloor \frac{g+3}{5}\right\rfloor ^3-\frac{8}{3} \left\lfloor
   \frac{g+3}{4}\right\rfloor ^3 + \frac{7}{3} \left\lfloor \frac{g+4}{9}\right\rfloor ^3+2
   \left\lfloor \frac{g+4}{7}\right\rfloor ^3-\frac{2}{3} \left\lfloor \frac{g+4}{6}\right\rfloor
   ^3-\frac{1}{3} \left\lfloor \frac{g+4}{5}\right\rfloor ^3-\frac{8}{3} \left\lfloor
   \frac{g+5}{10}\right\rfloor ^3-\frac{7}{3} \left\lfloor \frac{g+5}{9}\right\rfloor ^3-\frac{7}{6}
   \left\lfloor \frac{g+5}{7}\right\rfloor ^3+2 \left\lfloor \frac{g+5}{6}\right\rfloor
   ^3 + \frac{7}{3} \left\lfloor \frac{g+6}{9}\right\rfloor ^3-\frac{8}{3} \left\lfloor
   \frac{g+6}{7}\right\rfloor ^3-\frac{17}{6} \left\lfloor \frac{g+7}{10}\right\rfloor
   ^3-\frac{7}{3} \left\lfloor \frac{g+7}{9}\right\rfloor ^3-\frac{17}{6} \left\lfloor
   \frac{g+9}{10}\right\rfloor ^3 + \frac{5}{2} \left\lfloor \frac{1}{15} (2 g+1)\right\rfloor
   ^3 + \frac{4}{3} \left\lfloor \frac{1}{7} (2 g+1)\right\rfloor ^3 + \frac{427}{124} \left\lfloor
   \frac{1}{5} (2 g+1)\right\rfloor ^3 + \frac{17}{6} \left\lfloor \frac{1}{7} (2 g+2)\right\rfloor
   ^3-\frac{1}{3} \left\lfloor \frac{1}{5} (2 g+2)\right\rfloor ^3 + \frac{779}{264} \left\lfloor
   \frac{1}{7} (2 g+3)\right\rfloor ^3 + \frac{7}{3} \left\lfloor \frac{1}{7} (2 g+5)\right\rfloor
   ^3 + \frac{5}{2} \left\lfloor \frac{1}{15} (2 g+6)\right\rfloor ^3 + \frac{5}{6} \left\lfloor
   \frac{1}{7} (2 g+6)\right\rfloor ^3 + \frac{5}{2} \left\lfloor \frac{1}{15} (2 g+9)\right\rfloor
   ^3 + \frac{5}{2} \left\lfloor \frac{1}{15} (2 g+12)\right\rfloor ^3-\frac{1}{6} \left\lfloor
   \frac{1}{10} (3 g+1)\right\rfloor ^3-\frac{3}{2} \left\lfloor \frac{1}{7} (3 g+1)\right\rfloor
   ^3-\frac{3}{2} \left\lfloor \frac{1}{7} (3 g+2)\right\rfloor ^3 + \frac{361}{93} \left\lfloor
   \frac{1}{10} (3 g+3)\right\rfloor ^3 + \frac{2}{3} \left\lfloor \frac{1}{7} (3 g+3)\right\rfloor
   ^3-\frac{3191}{186} \left\lfloor \frac{1}{10} (3 g+4)\right\rfloor ^3 + \frac{2}{3} \left\lfloor
   \frac{1}{7} (3 g+4)\right\rfloor ^3-\frac{3}{2} \left\lfloor \frac{1}{7} (3 g+5)\right\rfloor
   ^3-\frac{3}{2} \left\lfloor \frac{1}{7} (3 g+6)\right\rfloor ^3-\frac{1}{6} \left\lfloor
   \frac{1}{10} (3 g+9)\right\rfloor ^3 + \frac{2}{3} \left\lfloor \frac{1}{9} (4 g+1)\right\rfloor
   ^3 + \frac{2}{3} \left\lfloor \frac{1}{9} (4 g+3)\right\rfloor ^3-\frac{4}{3} \left\lfloor
   \frac{1}{9} (4 g+4)\right\rfloor ^3-\frac{2}{3} \left\lfloor \frac{1}{9} (4 g+5)\right\rfloor
   ^3-\frac{2}{3} \left\lfloor \frac{1}{9} (4 g+7)\right\rfloor ^3 + \frac{2}{3} \left\lfloor
   \frac{1}{9} (4 g+8)\right\rfloor ^3+(5-g) \left\lfloor \frac{g}{9}\right\rfloor ^2+(g-5)
   \left\lfloor \frac{g}{9}\right\rfloor ^2+3 \left\lfloor \frac{g}{7}\right\rfloor ^2+(2 g-5)
   \left\lfloor \frac{g}{4}\right\rfloor ^2+2 \left(3-\frac{3 g}{2}\right) \left\lfloor \frac{2
   g}{7}\right\rfloor ^2+2 \left(\frac{1}{2}-\frac{g}{2}\right) \left\lfloor
   \frac{g}{3}\right\rfloor ^2+2 \left(\frac{g}{2}-\frac{1}{2}\right) \left\lfloor
   \frac{g}{3}\right\rfloor ^2 + \left(g-\frac{3}{2}\right) \left\lfloor \frac{3 g}{7}\right\rfloor
   ^2 + \left(\frac{3 g}{2}-3\right) \left\lfloor \frac{3 g}{7}\right\rfloor
   ^2 + \left(\frac{3}{2}-g\right) \left\lfloor \frac{4 g}{9}\right\rfloor ^2 + \left(\frac{2339
   g}{286}-\frac{614}{143}\right) \left\lfloor \frac{g+1}{7}\right\rfloor ^2 + \frac{1}{2}
   \left\lfloor \frac{g+1}{7}\right\rfloor ^2 + \left(\frac{3 g}{2}-\frac{3 \left\lfloor
   \frac{g}{2}\right\rfloor }{2}-\frac{3}{2}\right) \left\lfloor \frac{g+1}{6}\right\rfloor
   ^2 + \frac{1}{2} \left\lfloor \frac{g+1}{5}\right\rfloor ^2+(2 g-3) \left\lfloor
   \frac{g+1}{4}\right\rfloor ^2+3 \left(g-\frac{3 \left\lfloor \frac{g}{2}\right\rfloor
   }{2}-\frac{1}{2}\right) \left\lfloor \frac{g+1}{3}\right\rfloor ^2 + \left(-g + \frac{3 \left\lfloor
   \frac{g}{2}\right\rfloor }{2} + \frac{1}{2}\right) \left\lfloor \frac{g+1}{3}\right\rfloor ^2+(3-g)
   \left\lfloor \frac{g+2}{9}\right\rfloor ^2+(2 g-1) \left\lfloor \frac{g+2}{4}\right\rfloor
   ^2 + \left(-\frac{g}{2}-\frac{1}{2}\right) \left\lfloor \frac{g+2}{3}\right\rfloor
   ^2 + \left(\frac{g}{2} + \frac{1}{2}\right) \left\lfloor \frac{g+2}{3}\right\rfloor
   ^2 + \left(\frac{g}{2}+1\right) \left\lfloor \frac{g+2}{3}\right\rfloor
   ^2 + \left(g-\frac{5}{2}\right) \left\lfloor \frac{g+3}{9}\right\rfloor
   ^2 + \left(-g + \frac{\left\lfloor \frac{2 g}{7}\right\rfloor }{2} + \frac{1}{2}\right) \left\lfloor
   \frac{g+3}{7}\right\rfloor ^2 + \left(g + \frac{\left\lfloor \frac{2 g}{7}\right\rfloor }{2}-2\right)
   \left\lfloor \frac{g+3}{7}\right\rfloor ^2 + \left(\left\lfloor \frac{g}{3}\right\rfloor
    + \frac{1}{2}\right) \left\lfloor \frac{g+3}{6}\right\rfloor ^2 + \left(\frac{3 g}{2}-\frac{3
   \left\lfloor \frac{g}{2}\right\rfloor }{2}\right) \left\lfloor \frac{g+3}{6}\right\rfloor
   ^2 + \left(-\frac{3 g}{2} + \left\lfloor \frac{g}{3}\right\rfloor  + \frac{3 \left\lfloor
   \frac{g}{2}\right\rfloor }{2}+2\right) \left\lfloor \frac{g+3}{6}\right\rfloor
   ^2 + \left(\frac{g}{2} + \frac{3}{2}\right) \left\lfloor \frac{g+3}{5}\right\rfloor ^2+(2 g+1)
   \left\lfloor \frac{g+3}{4}\right\rfloor ^2 + \left(\frac{3 g}{2}-3 \left\lfloor
   \frac{g}{2}\right\rfloor  + \frac{3}{2}\right) \left\lfloor \frac{g+4}{6}\right\rfloor
   ^2 + \left(\left\lfloor \frac{g+1}{3}\right\rfloor  + \frac{1}{2}\right) \left\lfloor
   \frac{g+4}{6}\right\rfloor ^2 + \left(\frac{g}{2}+1\right) \left\lfloor \frac{g+4}{5}\right\rfloor
   ^2+(g-1) \left\lfloor \frac{g+5}{9}\right\rfloor ^2 + \left(-\left\lfloor \frac{g}{3}\right\rfloor
   -\frac{1}{2}\right) \left\lfloor \frac{g+5}{6}\right\rfloor ^2 + \left(-\left\lfloor
   \frac{g}{3}\right\rfloor -\left\lfloor \frac{g+2}{3}\right\rfloor \right) \left\lfloor
   \frac{g+5}{6}\right\rfloor ^2 + \left(\frac{\left\lfloor \frac{4 g}{9}\right\rfloor }{2}-g\right)
   \left\lfloor \frac{g+6}{9}\right\rfloor ^2+(g+1) \left\lfloor \frac{g+6}{7}\right\rfloor ^2+(g+2)
   \left\lfloor \frac{g+9}{10}\right\rfloor ^2+(5-g) \left\lfloor \frac{1}{15} (2 g+1)\right\rfloor
   ^2 + \left(-g + \frac{\left\lfloor \frac{g+1}{7}\right\rfloor }{2}+2\right) \left\lfloor \frac{1}{7}
   (2 g+1)\right\rfloor ^2 + \left(\frac{299 g}{124}-\frac{785}{248}\right) \left\lfloor \frac{1}{5}
   (2 g+1)\right\rfloor ^2 + \left(\frac{3}{2}-\frac{3 g}{2}\right) \left\lfloor \frac{1}{7} (2
   g+2)\right\rfloor ^2 + \left(-g-\frac{\left\lfloor \frac{3 g}{7}\right\rfloor }{2}+1\right)
   \left\lfloor \frac{1}{7} (2 g+2)\right\rfloor ^2 + \frac{1}{2} \left\lfloor
   \frac{g+1}{5}\right\rfloor  \left\lfloor \frac{1}{5} (2 g+2)\right\rfloor ^2 + \left\lfloor
   \frac{1}{5} (2 g+2)\right\rfloor ^2 + \left(-\frac{807 g}{1144} + \frac{145 \left\lfloor
   \frac{g+1}{7}\right\rfloor }{88} + \frac{5149}{2288}\right) \left\lfloor \frac{1}{7} (2
   g+3)\right\rfloor ^2+(2-g) \left\lfloor \frac{1}{15} (2 g+6)\right\rfloor ^2 + \left(\frac{3
   \left\lfloor \frac{g+6}{7}\right\rfloor }{2}-\frac{g}{2}\right) \left\lfloor \frac{1}{7} (2
   g+6)\right\rfloor ^2 + \left(\frac{1}{2}-g\right) \left\lfloor \frac{1}{15} (2 g+9)\right\rfloor
   ^2+(-g-1) \left\lfloor \frac{1}{15} (2 g+12)\right\rfloor ^2 + \left(\frac{g}{2}-\left\lfloor
   \frac{g+3}{5}\right\rfloor -\frac{1}{2}\right) \left\lfloor \frac{1}{10} (3 g+1)\right\rfloor
   ^2 + \left(2 g-\frac{5}{2}\right) \left\lfloor \frac{1}{7} (3 g+1)\right\rfloor ^2 + \left(\frac{3
   g}{2} + \frac{3}{2} \left\lfloor \frac{1}{7} (2 g+1)\right\rfloor -\frac{3}{2}\right) \left\lfloor
   \frac{1}{7} (3 g+2)\right\rfloor ^2 + \left(\frac{1157 g}{124} + \frac{2003}{124} \left\lfloor
   \frac{1}{5} (2 g+1)\right\rfloor -\frac{55}{4}\right) \left\lfloor \frac{1}{10} (3
   g+3)\right\rfloor ^2 + \left(-g-\frac{1}{2}\right) \left\lfloor \frac{1}{7} (3 g+3)\right\rfloor
   ^2 + \left(\frac{5163 g}{124}-\left\lfloor \frac{g}{2}\right\rfloor -\frac{1941}{124} \left\lfloor
   \frac{1}{5} (2 g+1)\right\rfloor -\frac{2489}{124}\right) \left\lfloor \frac{1}{10} (3
   g+4)\right\rfloor ^2 + \left(-g + \left\lfloor \frac{g+3}{7}\right\rfloor -\frac{1}{2}\right)
   \left\lfloor \frac{1}{7} (3 g+4)\right\rfloor ^2 + \left(\frac{3 g}{2} + \frac{3}{2} \left\lfloor
   \frac{1}{7} (2 g+3)\right\rfloor \right) \left\lfloor \frac{1}{7} (3 g+5)\right\rfloor ^2 + \left(2
   g + \frac{\left\lfloor \frac{g}{7}\right\rfloor }{2}-\frac{1}{2} \left\lfloor \frac{1}{7} (2
   g+6)\right\rfloor +1\right) \left\lfloor \frac{1}{7} (3 g+6)\right\rfloor
   ^2 + \left(\frac{g}{2}-\left\lfloor \frac{g+4}{5}\right\rfloor -\frac{3 \left\lfloor
   \frac{g+9}{10}\right\rfloor }{2}+1\right) \left\lfloor \frac{1}{10} (3 g+9)\right\rfloor
   ^2 + \left(\frac{3}{2}-g\right) \left\lfloor \frac{1}{9} (4 g+1)\right\rfloor
   ^2 + \left(-g + \left\lfloor \frac{g+2}{9}\right\rfloor  + \frac{3}{2}\right) \left\lfloor \frac{1}{9}
   (4 g+3)\right\rfloor ^2 + \left(g + \frac{1}{2}\right) \left\lfloor \frac{1}{9} (4 g+4)\right\rfloor
   ^2 + \left(g-\left\lfloor \frac{g}{9}\right\rfloor -\frac{1}{2}\right) \left\lfloor \frac{1}{9} (4
   g+4)\right\rfloor ^2 + \left(g-\left\lfloor \frac{g+5}{9}\right\rfloor  + \frac{1}{2}\right)
   \left\lfloor \frac{1}{9} (4 g+5)\right\rfloor ^2 + \left(g-\left\lfloor \frac{g+3}{9}\right\rfloor
    + \frac{1}{2}\right) \left\lfloor \frac{1}{9} (4 g+7)\right\rfloor ^2 + \left(-g + \left\lfloor
   \frac{g}{9}\right\rfloor  + \frac{1}{2}\right) \left\lfloor \frac{1}{9} (4 g+8)\right\rfloor
   ^2 + \frac{3 \left\lfloor \frac{g}{7}\right\rfloor }{2} + \left(-\frac{g^2}{2} + \frac{5
   g}{2}-\frac{7}{3}\right) \left\lfloor \frac{g}{4}\right\rfloor +2 \left(\frac{g^2}{2}-2
   g + \frac{3}{2}\right) \left\lfloor \frac{2 g}{7}\right\rfloor  + \left\lfloor
   \frac{g}{9}\right\rfloor  \left(-\frac{g^2}{2} + \frac{3 g}{2}-\frac{3 \left\lfloor
   \frac{g}{3}\right\rfloor ^2}{2} + \left(g-\frac{1}{2}\right) \left\lfloor \frac{g}{3}\right\rfloor
   -\frac{8}{3}\right) + \left\lfloor \frac{g}{9}\right\rfloor  \left(\frac{g^2}{2}-\frac{3
   g}{2} + \frac{3 \left\lfloor \frac{g}{3}\right\rfloor ^2}{2} + \left(\frac{1}{2}-g\right)
   \left\lfloor \frac{g}{3}\right\rfloor  + \frac{8}{3}\right) + \left(-\frac{g^2}{2} + \frac{3
   g}{2}-\frac{5}{6}\right) \left\lfloor \frac{3 g}{7}\right\rfloor  + \left(-\frac{g^2}{2}+2
   g-\frac{3}{2}\right) \left\lfloor \frac{3 g}{7}\right\rfloor  + \left(\frac{g^2}{2}-\frac{3
   g}{2} + \frac{5}{6}\right) \left\lfloor \frac{4 g}{9}\right\rfloor  + \left\lfloor
   \frac{g}{6}\right\rfloor ^2 \left(3-\frac{3 \left\lfloor \frac{g}{2}\right\rfloor }{2}\right)+2
   \left\lfloor \frac{g}{3}\right\rfloor  \left(-\frac{g^2}{4} + \frac{3 g}{4}-\frac{\left\lfloor
   \frac{g}{2}\right\rfloor }{2}-\frac{1}{3}\right)+2 \left\lfloor \frac{g}{3}\right\rfloor 
   \left(\frac{g^2}{4}-\frac{3 g}{4} + \frac{\left\lfloor \frac{g}{2}\right\rfloor
   }{2} + \frac{1}{3}\right) + \left(-\frac{g}{2}-\frac{1}{4}\right) \left\lfloor
   \frac{g}{2}\right\rfloor  + \left(-\frac{g^2}{4} + \frac{g}{4} + \frac{1}{4}\right) \left\lfloor
   \frac{g}{2}\right\rfloor  + \left(-\frac{g^2}{4} + \frac{g}{2} + \frac{1}{4}\right) \left\lfloor
   \frac{g}{2}\right\rfloor  + \left\lfloor \frac{g}{6}\right\rfloor ^2 \left(\frac{3 \left\lfloor
   \frac{g}{2}\right\rfloor }{2}-\frac{3}{2}\right) + \left\lfloor \frac{g}{6}\right\rfloor 
   \left(\frac{g^2}{8}-\frac{g}{8} + \left(\frac{5}{4}-\frac{g}{2}\right) \left\lfloor
   \frac{g}{2}\right\rfloor \right) + \left\lfloor \frac{g}{6}\right\rfloor 
   \left(-\frac{g^2}{8} + \frac{g}{8} + \left(\frac{g}{2}-\frac{9}{4}\right) \left\lfloor
   \frac{g}{2}\right\rfloor  + \frac{3}{2}\right) + \left(\frac{2625 g}{572}-\frac{7771}{3432}\right)
   \left\lfloor \frac{g+1}{7}\right\rfloor -\frac{\left\lfloor \frac{g+1}{7}\right\rfloor
   }{6} + \left(-\frac{3 g^2}{8} + \frac{7 g}{8} + \left(\frac{g}{2}-\frac{3}{4}\right) \left\lfloor
   \frac{g}{2}\right\rfloor \right) \left\lfloor \frac{g+1}{6}\right\rfloor
    + \left(\frac{g^2}{4}-\frac{g}{4} + \frac{\left\lfloor \frac{g}{2}\right\rfloor
   }{2} + \frac{1}{2}\right) \left\lfloor \frac{g+1}{5}\right\rfloor  + \left(-\frac{g^2}{2} + \frac{3
   g}{2}-\frac{1}{3}\right) \left\lfloor \frac{g+1}{4}\right\rfloor  + \left(\frac{3 g^2}{4}-\frac{3
   g}{4}+(1-g) \left\lfloor \frac{g}{2}\right\rfloor  + \frac{2}{3}\right) \left\lfloor
   \frac{g+1}{3}\right\rfloor +3 \left(-\frac{g^2}{2} + \frac{g}{2} + \left(g-\frac{1}{2}\right)
   \left\lfloor \frac{g}{2}\right\rfloor \right) \left\lfloor \frac{g+1}{3}\right\rfloor
    + \left(\frac{g^2}{2}-\frac{3 g}{2} + \frac{3 \left\lfloor \frac{g}{3}\right\rfloor
   ^2}{2} + \left(\frac{1}{2}-g\right) \left\lfloor \frac{g}{3}\right\rfloor  + \frac{2}{3}\right)
   \left\lfloor \frac{g+2}{9}\right\rfloor  + \left(-\frac{g^2}{2} + \frac{g}{2} + \frac{2}{3}\right)
   \left\lfloor \frac{g+2}{4}\right\rfloor  + \left\lfloor \frac{g+2}{6}\right\rfloor ^2
   \left(\frac{3}{2}-\left\lfloor \frac{g+2}{3}\right\rfloor
   \right) + \left(-\frac{g^2}{4}-\frac{g}{4}-\frac{\left\lfloor \frac{g}{2}\right\rfloor
   }{2}-\frac{2}{3}\right) \left\lfloor \frac{g+2}{3}\right\rfloor
    + \left(-\frac{g^2}{4}-\frac{g}{4}-\frac{\left\lfloor \frac{g}{2}\right\rfloor
   }{2}-\frac{1}{6}\right) \left\lfloor \frac{g+2}{3}\right\rfloor
    + \left(\frac{g^2}{4} + \frac{g}{4} + \frac{\left\lfloor \frac{g}{2}\right\rfloor
   }{2} + \frac{1}{6}\right) \left\lfloor \frac{g+2}{3}\right\rfloor  + \left\lfloor
   \frac{g+2}{6}\right\rfloor ^2 \left(-\frac{3 \left\lfloor \frac{g}{2}\right\rfloor
   }{2} + \left\lfloor \frac{g+2}{3}\right\rfloor +1\right) + \left\lfloor \frac{g+2}{6}\right\rfloor 
   \left(-\frac{g^2}{8} + \frac{g}{8}-\frac{1}{2} \left\lfloor \frac{g+2}{3}\right\rfloor
   ^2 + \left(\frac{g}{2}-\frac{5}{4}\right) \left\lfloor \frac{g}{2}\right\rfloor  + \frac{\left\lfloor
   \frac{g+2}{3}\right\rfloor }{2} + \frac{1}{6}\right) + \left\lfloor \frac{g+2}{6}\right\rfloor 
   \left(\frac{1}{2} \left\lfloor \frac{g+2}{3}\right\rfloor ^2-\frac{3 \left\lfloor
   \frac{g+2}{3}\right\rfloor }{2} + \frac{5}{6}\right) + \left(-\frac{g^2}{2} + \frac{g}{2}-\frac{3
   \left\lfloor \frac{g}{3}\right\rfloor ^2}{2} + \left(g-\frac{1}{2}\right) \left\lfloor
   \frac{g}{3}\right\rfloor -\frac{1}{6}\right) \left\lfloor \frac{g+3}{9}\right\rfloor
    + \left(\frac{3}{2} \left\lfloor \frac{2 g}{7}\right\rfloor ^2+(2-g) \left\lfloor \frac{2
   g}{7}\right\rfloor  + \frac{2}{3}\right) \left\lfloor \frac{g+3}{7}\right\rfloor
    + \left(\frac{g^2}{2}-\frac{g}{2} + \frac{3}{2} \left\lfloor \frac{2 g}{7}\right\rfloor ^2+(2-g)
   \left\lfloor \frac{2 g}{7}\right\rfloor \right) \left\lfloor \frac{g+3}{7}\right\rfloor
    + \left(-\frac{\left\lfloor \frac{g}{3}\right\rfloor ^2}{2}-\frac{\left\lfloor
   \frac{g}{3}\right\rfloor }{2} + \frac{1}{6}\right) \left\lfloor \frac{g+3}{6}\right\rfloor
    + \left(\frac{3 g^2}{8}-\frac{7 g}{8}-\frac{\left\lfloor \frac{g}{3}\right\rfloor
   ^2}{2}-\frac{\left\lfloor \frac{g}{3}\right\rfloor }{2} + \left(\frac{3}{4}-\frac{g}{2}\right)
   \left\lfloor \frac{g}{2}\right\rfloor  + \frac{1}{6}\right) \left\lfloor \frac{g+3}{6}\right\rfloor
    + \left(-\frac{3 g^2}{8}-\frac{g}{8} + \left(\frac{g}{2} + \frac{1}{4}\right) \left\lfloor
   \frac{g}{2}\right\rfloor  + \frac{1}{2}\right) \left\lfloor \frac{g+3}{6}\right\rfloor
    + \left(-\frac{g^2}{2}-\frac{g}{2} + \frac{2}{3}\right) \left\lfloor \frac{g+3}{4}\right\rfloor
    + \left(-\frac{g^2}{2}-g+(g+1) \left\lfloor \frac{g}{2}\right\rfloor -\frac{1}{2}\right)
   \left\lfloor \frac{g+4}{6}\right\rfloor  + \left(-\frac{1}{2} \left\lfloor
   \frac{g+1}{3}\right\rfloor ^2-\frac{\left\lfloor \frac{g+1}{3}\right\rfloor
   }{2} + \frac{1}{6}\right) \left\lfloor \frac{g+4}{6}\right\rfloor  + \left(-\frac{g^2}{4}-\frac{5
   g}{4} + \frac{\left\lfloor \frac{g}{2}\right\rfloor }{2}-\frac{2}{3}\right) \left\lfloor
   \frac{g+4}{5}\right\rfloor  + \left(-\frac{g^2}{2}-\frac{g}{2}-\frac{3}{2} \left\lfloor
   \frac{g+2}{3}\right\rfloor ^2 + \left(g + \frac{3}{2}\right) \left\lfloor \frac{g+2}{3}\right\rfloor
    + \frac{1}{3}\right) \left\lfloor \frac{g+5}{9}\right\rfloor  + \left(\frac{\left\lfloor
   \frac{g}{3}\right\rfloor ^2}{2} + \frac{\left\lfloor \frac{g}{3}\right\rfloor
   }{2}-\frac{1}{6}\right) \left\lfloor \frac{g+5}{6}\right\rfloor  + \left(\frac{\left\lfloor
   \frac{g}{3}\right\rfloor ^2}{2} + \frac{\left\lfloor \frac{g}{3}\right\rfloor }{2} + \frac{1}{2}
   \left\lfloor \frac{g+2}{3}\right\rfloor ^2-\frac{\left\lfloor \frac{g+2}{3}\right\rfloor
   }{2}-\frac{1}{3}\right) \left\lfloor \frac{g+5}{6}\right\rfloor
    + \left(\frac{g^2}{2}-\frac{g}{2} + \left\lfloor \frac{4 g}{9}\right\rfloor ^2 + \frac{3}{2}
   \left\lfloor \frac{g+1}{3}\right\rfloor ^2 + \left(\frac{3}{2}-g\right) \left\lfloor \frac{4
   g}{9}\right\rfloor  + \left(-g-\frac{1}{2}\right) \left\lfloor \frac{g+1}{3}\right\rfloor
   -\frac{1}{3}\right) \left\lfloor \frac{g+6}{9}\right\rfloor  + \frac{2 \left\lfloor
   \frac{g+6}{7}\right\rfloor }{3} + \left(-\frac{g^2}{4}-\frac{3 g}{4}-\frac{\left\lfloor
   \frac{g}{2}\right\rfloor }{2}-\frac{1}{6}\right) \left\lfloor \frac{g+9}{10}\right\rfloor
    + \left(\frac{g^2}{2}-\frac{3 g}{2}-\left\lfloor \frac{g+1}{7}\right\rfloor ^2-\frac{\left\lfloor
   \frac{g+1}{7}\right\rfloor }{2} + \frac{2}{3}\right) \left\lfloor \frac{1}{7} (2 g+1)\right\rfloor
    + \left\lfloor \frac{g+2}{5}\right\rfloor ^2 \left(\left\lfloor \frac{g}{2}\right\rfloor
    + \frac{1}{2} \left\lfloor \frac{1}{5} (2 g+1)\right\rfloor  + \frac{1}{2}\right) + \left(-\frac{3789
   g^2}{248} + \frac{4761 g}{248} + \left(\frac{g}{2} + \frac{1}{4}\right) \left\lfloor
   \frac{g}{2}\right\rfloor -\frac{1639}{248}\right) \left\lfloor \frac{1}{5} (2 g+1)\right\rfloor
    + \left\lfloor \frac{g+2}{5}\right\rfloor  \left(\left(-\left\lfloor \frac{g}{2}\right\rfloor
   -\frac{1}{2}\right) \left\lfloor \frac{1}{5} (2 g+1)\right\rfloor
    + \frac{1}{6}\right) + \left\lfloor \frac{1}{15} (2 g+1)\right\rfloor 
   \left(\frac{g^2}{2}-\left\lfloor \frac{1}{5} (2 g+1)\right\rfloor  g-\frac{3 g}{2} + \frac{3}{2}
   \left\lfloor \frac{g+1}{3}\right\rfloor ^2-\left\lfloor \frac{g+2}{5}\right\rfloor
   ^2 + \left\lfloor \frac{1}{5} (2 g+1)\right\rfloor ^2 + \left(\frac{1}{2}-g\right) \left\lfloor
   \frac{g+1}{3}\right\rfloor  + \left\lfloor \frac{g+2}{5}\right\rfloor  \left\lfloor \frac{1}{5} (2
   g+1)\right\rfloor  + \frac{5}{2}\right) + \left\lfloor \frac{g+4}{7}\right\rfloor ^2
   \left(-\left\lfloor \frac{1}{7} (2 g+2)\right\rfloor -\frac{1}{2}\right) + \left\lfloor
   \frac{g+4}{7}\right\rfloor ^2 \left(-\frac{g}{2}-\frac{\left\lfloor \frac{3 g}{7}\right\rfloor
   }{2} + \frac{1}{2} \left\lfloor \frac{1}{7} (2 g+2)\right\rfloor
   -\frac{1}{2}\right) + \left(\frac{g^2}{2}-g\right) \left\lfloor \frac{1}{7} (2 g+2)\right\rfloor
    + \left(\frac{g^2}{2}-\frac{3 g}{2} + \frac{3}{2} \left\lfloor \frac{3 g}{7}\right\rfloor ^2+(2-g)
   \left\lfloor \frac{3 g}{7}\right\rfloor  + \frac{2}{3}\right) \left\lfloor \frac{1}{7} (2
   g+2)\right\rfloor  + \left\lfloor \frac{g+4}{7}\right\rfloor  \left(\frac{1}{2} \left\lfloor
   \frac{1}{7} (2 g+2)\right\rfloor ^2 + \frac{1}{2} \left\lfloor \frac{1}{7} (2 g+2)\right\rfloor
   -\frac{1}{6}\right) + \left\lfloor \frac{g+4}{7}\right\rfloor  \left(-\left\lfloor \frac{3
   g}{7}\right\rfloor ^2 + \left(g-\frac{3}{2}\right) \left\lfloor \frac{3 g}{7}\right\rfloor
    + \frac{3}{2} \left\lfloor \frac{1}{7} (2 g+2)\right\rfloor ^2 + \frac{g}{2}+(1-g) \left\lfloor
   \frac{1}{7} (2 g+2)\right\rfloor -\frac{5}{6}\right) + \left\lfloor \frac{g+5}{10}\right\rfloor ^2
   \left(g + \left\lfloor \frac{g+1}{5}\right\rfloor -\left\lfloor \frac{1}{5} (2 g+2)\right\rfloor
   -1\right) + \left(\frac{g^2}{4}-\left\lfloor \frac{g+1}{5}\right\rfloor  g-\frac{g}{4} + \frac{1}{2}
   \left\lfloor \frac{g+1}{5}\right\rfloor ^2 + \frac{\left\lfloor \frac{g}{2}\right\rfloor
   }{2} + \frac{1}{3}\right) \left\lfloor \frac{1}{5} (2 g+2)\right\rfloor  + \left\lfloor
   \frac{g+3}{5}\right\rfloor  \left(-\frac{g^2}{4}-\frac{5 g}{4} + \frac{\left\lfloor
   \frac{g}{2}\right\rfloor }{2} + \left\lfloor \frac{1}{5} (2 g+2)\right\rfloor
   -\frac{1}{6}\right) + \left\lfloor \frac{g+5}{10}\right\rfloor  \left(\left\lfloor
   \frac{g+1}{5}\right\rfloor ^2+(1-g) \left\lfloor \frac{g+1}{5}\right\rfloor  + \left\lfloor
   \frac{1}{5} (2 g+2)\right\rfloor  \left\lfloor \frac{g+1}{5}\right\rfloor
    + \frac{2}{3}\right) + \left(g^2-\frac{3769 g}{1144}-\frac{1}{2} \left\lfloor
   \frac{g+1}{7}\right\rfloor ^2 + \left(\frac{1311}{1144}-\frac{2339 g}{286}\right) \left\lfloor
   \frac{g+1}{7}\right\rfloor -\frac{1381}{6864}\right) \left\lfloor \frac{1}{7} (2
   g+3)\right\rfloor  + \left(\frac{g^2}{2}-\frac{g}{2}-\left\lfloor \frac{g+1}{5}\right\rfloor
   ^2 + \frac{3}{2} \left\lfloor \frac{g+1}{3}\right\rfloor ^2 + \left\lfloor \frac{1}{5} (2
   g+2)\right\rfloor ^2-\left\lfloor \frac{g+1}{5}\right\rfloor  + \left(\frac{1}{2}-g\right)
   \left\lfloor \frac{g+1}{3}\right\rfloor  + \left(\left\lfloor \frac{g+1}{5}\right\rfloor -g\right)
   \left\lfloor \frac{1}{5} (2 g+2)\right\rfloor -\frac{1}{2}\right) \left\lfloor \frac{1}{15} (2
   g+6)\right\rfloor  + \left(\frac{1}{2} \left\lfloor \frac{g+6}{7}\right\rfloor ^2+(-g-1)
   \left\lfloor \frac{g+6}{7}\right\rfloor -\frac{g}{2}-\frac{5}{6}\right) \left\lfloor \frac{1}{7}
   (2 g+6)\right\rfloor  + \left(\frac{g^2}{2}-\left\lfloor \frac{2 g}{5}\right\rfloor 
   g-\frac{g}{2} + \left\lfloor \frac{2 g}{5}\right\rfloor ^2 + \frac{3}{2} \left\lfloor
   \frac{g+1}{3}\right\rfloor ^2-\left\lfloor \frac{g+4}{5}\right\rfloor
   ^2 + \left(\frac{1}{2}-g\right) \left\lfloor \frac{g+1}{3}\right\rfloor  + \left(\left\lfloor \frac{2
   g}{5}\right\rfloor +1\right) \left\lfloor \frac{g+4}{5}\right\rfloor -1\right) \left\lfloor
   \frac{1}{15} (2 g+9)\right\rfloor  + \left(\frac{3}{2} \left\lfloor \frac{g+1}{3}\right\rfloor
   ^2 + \left(\frac{1}{2}-g\right) \left\lfloor \frac{g+1}{3}\right\rfloor -\frac{3}{2} \left\lfloor
   \frac{g+3}{5}\right\rfloor ^2-\left\lfloor \frac{1}{5} (2 g+2)\right\rfloor ^2 + \left\lfloor
   \frac{g+3}{5}\right\rfloor  \left(g-\left\lfloor \frac{1}{5} (2 g+2)\right\rfloor
    + \frac{1}{2}\right)+g \left\lfloor \frac{1}{5} (2 g+2)\right\rfloor -\frac{1}{2}\right)
   \left\lfloor \frac{1}{15} (2 g+12)\right\rfloor  + \left\lfloor \frac{g+3}{10}\right\rfloor ^2
   \left(g-\frac{1}{2} \left\lfloor \frac{1}{10} (3 g+1)\right\rfloor
   -\frac{7}{2}\right) + \left(\frac{1}{2} \left\lfloor \frac{g+3}{5}\right\rfloor
   ^2 + \left(\left\lfloor \frac{1}{5} (2 g+2)\right\rfloor -\frac{1}{2}\right) \left\lfloor
   \frac{g+3}{5}\right\rfloor  + \left\lfloor \frac{1}{5} (2 g+2)\right\rfloor ^2 + \frac{g}{2}-g
   \left\lfloor \frac{1}{5} (2 g+2)\right\rfloor -\frac{1}{3}\right) \left\lfloor \frac{1}{10} (3
   g+1)\right\rfloor  + \left\lfloor \frac{g+3}{10}\right\rfloor  \left(-\frac{g^2}{4} + \frac{5
   g}{4}-\frac{3}{2} \left\lfloor \frac{1}{10} (3 g+1)\right\rfloor ^2-\frac{\left\lfloor
   \frac{g}{2}\right\rfloor }{2}+(g-1) \left\lfloor \frac{1}{10} (3 g+1)\right\rfloor
   -\frac{2}{3}\right) + \left\lfloor \frac{1}{7} (2 g+5)\right\rfloor ^2 \left(-2 g+3 \left\lfloor
   \frac{g+5}{7}\right\rfloor -\left\lfloor \frac{1}{7} (3 g+1)\right\rfloor
    + \frac{1}{2}\right)-\frac{3}{2} \left\lfloor \frac{g+2}{7}\right\rfloor ^2 \left\lfloor
   \frac{1}{7} (3 g+1)\right\rfloor  + \left(-g^2+2 g-1\right) \left\lfloor \frac{1}{7} (3
   g+1)\right\rfloor  + \left\lfloor \frac{1}{7} (2 g+5)\right\rfloor  \left(\frac{g^2}{2}-2
   \left\lfloor \frac{g+5}{7}\right\rfloor  g-\frac{g}{2} + 
   \left\lfloor \frac{g+5}{7}\right\rfloor
   ^2-\frac{1}{2} \left\lfloor \frac{1}{7} (3 g+1)\right\rfloor ^2 + \left(g + \frac{1}{2}\right)
   \left\lfloor \frac{1}{7} (3 g+1)\right\rfloor -\frac{5}{6}\right) + \left\lfloor
   \frac{g+2}{7}\right\rfloor  \left(\frac{1}{2} \left\lfloor \frac{1}{7} (3 g+1)\right\rfloor
   ^2-\frac{1}{2}\right) + \left(-\frac{g^2}{2}+g-\frac{1}{2} \left\lfloor \frac{1}{7} (2
   g+1)\right\rfloor ^2+(1-g) \left\lfloor \frac{1}{7} (2 g+1)\right\rfloor \right) \left\lfloor
   \frac{1}{7} (3 g+2)\right\rfloor  + \left(-\frac{759 g^2}{62} + \frac{896 g}{31} + \frac{821}{62}
   \left\lfloor \frac{1}{5} (2 g+1)\right\rfloor ^2 + \left(-g-\frac{106}{31}\right) \left\lfloor
   \frac{1}{5} (2 g+1)\right\rfloor -\frac{6559}{372}\right) \left\lfloor \frac{1}{10} (3
   g+3)\right\rfloor  + \left\lfloor \frac{g+5}{7}\right\rfloor ^2 \left(\frac{1}{2} \left\lfloor
   \frac{1}{7} (3 g+3)\right\rfloor -1\right) + 
   \left(\frac{g^2}{2} + \frac{g}{2}-\frac{1}{6}\right)
   \left\lfloor \frac{1}{7} (3 g+3)\right\rfloor  + 
   \left\lfloor \frac{g+5}{7}\right\rfloor 
   \left(\frac{g^2}{2} + \frac{g}{2} + \left\lfloor \frac{1}{7} (3 g+3)\right\rfloor
   ^2 + \left(-g-\frac{1}{2}\right) \left\lfloor \frac{1}{7} (3 g+3)\right\rfloor
    + \frac{1}{6}\right) + \left\lfloor \frac{g+7}{10}\right\rfloor ^2 \left(g-\frac{1}{2} \left\lfloor
   \frac{1}{10} (3 g+4)\right\rfloor \right) + \left(-\frac{3583 g^2}{124} + \left\lfloor
   \frac{g}{2}\right\rfloor  g + \frac{2489 g}{124}-\frac{485}{124} \left\lfloor \frac{1}{5} (2
   g+1)\right\rfloor ^2 + \left(\frac{2003 g}{62}-\left\lfloor \frac{g}{2}\right\rfloor
   -\frac{2489}{124}\right) \left\lfloor \frac{1}{5} (2 g+1)\right\rfloor  + \frac{821}{186}\right)
   \left\lfloor \frac{1}{10} (3 g+4)\right\rfloor  + \left\lfloor \frac{g+7}{10}\right\rfloor 
   \left(-\frac{g^2}{4} + \left\lfloor \frac{1}{10} (3 g+4)\right\rfloor  g + \frac{g}{4}-\frac{3}{2}
   \left\lfloor \frac{1}{10} (3 g+4)\right\rfloor ^2-\frac{\left\lfloor \frac{g}{2}\right\rfloor
   }{2} + \frac{5}{6}\right) + \left(\frac{g^2}{2} + \frac{g}{2} + \frac{1}{2} \left\lfloor
   \frac{g+3}{7}\right\rfloor ^2 + \left(-g-\frac{1}{2}\right) \left\lfloor \frac{g+3}{7}\right\rfloor
   -\frac{1}{6}\right) \left\lfloor \frac{1}{7} (3 g+4)\right\rfloor
    + \left(-\frac{g^2}{2}-\left\lfloor \frac{1}{7} (2 g+3)\right\rfloor  g-\frac{1}{2} \left\lfloor
   \frac{1}{7} (2 g+3)\right\rfloor ^2 + \frac{1}{2}\right) \left\lfloor \frac{1}{7} (3
   g+5)\right\rfloor  + \left(-g^2-g-\frac{3 \left\lfloor \frac{g}{7}\right\rfloor ^2}{2}-\left\lfloor
   \frac{1}{7} (2 g+6)\right\rfloor ^2-2 \left\lfloor \frac{g}{7}\right\rfloor
    + \left(g + \frac{3}{2}\right) \left\lfloor \frac{1}{7} (2 g+6)\right\rfloor -\frac{1}{2}\right)
   \left\lfloor \frac{1}{7} (3 g+6)\right\rfloor  + \left(-\frac{g^2}{2} + \left\lfloor \frac{2
   g}{5}\right\rfloor  g-g-\left\lfloor \frac{2 g}{5}\right\rfloor ^2-\frac{1}{2} \left\lfloor
   \frac{g+9}{10}\right\rfloor ^2 + \left(g-\left\lfloor \frac{2 g}{5}\right\rfloor +1\right)
   \left\lfloor \frac{g+4}{5}\right\rfloor +(g+2) \left\lfloor \frac{g+9}{10}\right\rfloor
   -\frac{5}{6}\right) \left\lfloor \frac{1}{10} (3 g+9)\right\rfloor  + \left\lfloor
   \frac{g+4}{9}\right\rfloor ^2 \left(-g + \frac{1}{2} \left\lfloor \frac{1}{9} (4 g+1)\right\rfloor
    + \frac{3}{2}\right) + \left(\frac{g^2}{2}-\frac{3 g}{2} + \frac{5}{6}\right) \left\lfloor \frac{1}{9}
   (4 g+1)\right\rfloor  + \left\lfloor \frac{g+4}{9}\right\rfloor 
   \left(\frac{g^2}{2}-\frac{g}{2} + \frac{3}{2} \left\lfloor \frac{g+2}{3}\right\rfloor
   ^2 + \left\lfloor \frac{1}{9} (4 g+1)\right\rfloor ^2 + \left(-g-\frac{3}{2}\right) \left\lfloor
   \frac{g+2}{3}\right\rfloor  + \left(\frac{3}{2}-g\right) \left\lfloor \frac{1}{9} (4
   g+1)\right\rfloor  + \frac{1}{6}\right) + \left(\frac{g^2}{2}-\frac{3 g}{2} + \frac{1}{2} \left\lfloor
   \frac{g+2}{9}\right\rfloor ^2 + \left(\frac{3}{2}-g\right) \left\lfloor \frac{g+2}{9}\right\rfloor
    + \frac{5}{6}\right) \left\lfloor \frac{1}{9} (4 g+3)\right\rfloor  + \left\lfloor
   \frac{g+7}{9}\right\rfloor ^2 \left(g-\frac{1}{2} \left\lfloor \frac{1}{9} (4 g+4)\right\rfloor
    + \frac{1}{2}\right) + \left(-\frac{g^2}{2}-\frac{g}{2} + \frac{1}{6}\right) \left\lfloor \frac{1}{9}
   (4 g+4)\right\rfloor  + \left(-\frac{g^2}{2} + \frac{g}{2}-\frac{\left\lfloor
   \frac{g}{9}\right\rfloor ^2}{2} + \left(g-\frac{1}{2}\right) \left\lfloor \frac{g}{9}\right\rfloor
    + \frac{1}{6}\right) \left\lfloor \frac{1}{9} (4 g+4)\right\rfloor  + \left\lfloor
   \frac{g+7}{9}\right\rfloor  \left(-\frac{g^2}{2}-\frac{g}{2}-\frac{3}{2} \left\lfloor
   \frac{g+2}{3}\right\rfloor ^2-\left\lfloor \frac{1}{9} (4 g+4)\right\rfloor
   ^2 + \left(g + \frac{1}{2}\right) \left\lfloor \frac{g+2}{3}\right\rfloor  + \left(g + \frac{1}{2}\right)
   \left\lfloor \frac{1}{9} (4 g+4)\right\rfloor
    + \frac{5}{6}\right) + \left(-\frac{g^2}{2}-\frac{g}{2}-\frac{1}{2} \left\lfloor
   \frac{g+5}{9}\right\rfloor ^2 + \left(g + \frac{1}{2}\right) \left\lfloor \frac{g+5}{9}\right\rfloor
    + \frac{1}{6}\right) \left\lfloor \frac{1}{9} (4 g+5)\right\rfloor
    + \left(-\frac{g^2}{2}-\frac{g}{2}-\frac{1}{2} \left\lfloor \frac{g+3}{9}\right\rfloor
   ^2 + \left(g + \frac{1}{2}\right) \left\lfloor \frac{g+3}{9}\right\rfloor  + \frac{1}{6}\right)
   \left\lfloor \frac{1}{9} (4 g+7)\right\rfloor  + \left(\frac{g^2}{2}-\frac{g}{2} + \frac{\left\lfloor
   \frac{g}{9}\right\rfloor ^2}{2} + \left(\frac{1}{2}-g\right) \left\lfloor \frac{g}{9}\right\rfloor
   -\frac{1}{6}\right) \left\lfloor \frac{1}{9} (4 g+8)\right\rfloor +1$.}
\end{proposition}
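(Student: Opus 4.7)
The plan is to imitate verbatim the strategy used for multiplicity four in Section \ref{sec:mul-4}, now applied to multiplicity $m=5$. The governing tool is the lemma of \cite{karakas} stated above, which singles out an index $i^*\in\{1,2,3,4\}$ realizing $5k_{i^*}+i^*=\max_i(5k_i+i)$ and reduces membership in the Kunz polytope to the inequalities $k_i+k_j-k_{(i+j)\bmod 5}\geq 0$ (resp.\ $\geq -1$) with $i,j$ ranging over $\{1,2,3,4\}\setminus\{i^*\}$.

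First I would split into four cases according to $i^*\in\{1,2,3,4\}$. In each case I would write down the explicit linear system: the equation $k_1+k_2+k_3+k_4=g$, the conditions $k_j\geq 1$, the relevant Karaka\c{s} inequalities, and the extra comparisons expressing that $5k_{i^*}+i^*$ is the maximum among $\{5k_j+j:j=1,\dots,4\}$ (with strict or weak inequality chosen so that the four cases partition the set of Kunz tuples). This mirrors exactly the three-case split performed in Propositions \ref{prop:count-4-f-1}--\ref{prop:count-4-f-3}.

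Second, I would feed each of the four resulting parametric polytopes to \texttt{barvinok} with the same \texttt{card P} command used in Section \ref{sec:mul-4}. Each invocation returns a piecewise Ehrhart quasi-polynomial in $g$ whose generic piece is valid from an explicit threshold onwards. Summing the four generic pieces and simplifying gives the closed-form expression displayed in the statement; the bound $g>13$ is simply the largest of the four individual thresholds beyond which every case is in its generic range.

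The principal obstacle is not conceptual but notational. Each individual \texttt{card P} call already produces a long quasi-polynomial involving floor functions modulo $2,3,4,5,6,7,9,10$ and $15$, and combining the four outputs into a single closed form is a tedious but routine simplification. Correctness can be verified on a finite range by comparing with \verb|Filtered(NumericalSemigroupsWithGenus(g),s->Multiplicity(s)=5)|, while for all $g>13$ it is guaranteed by Barvinok's theorem, which ensures that \texttt{barvinok} returns the true Ehrhart quasi-polynomial of each of the four parametric polytopes.
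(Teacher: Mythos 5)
Your proposal matches the paper's own approach: Section \ref{sec:mul-5} explicitly says the result is obtained by ``repeating the same procedure'' as for multiplicity four, i.e.\ splitting into cases according to which index $i^*\in\{1,2,3,4\}$ realizes $m^*=5k_{i^*}+i^*$ in the Karaka\c{s} lemma, writing the corresponding linear systems with $k_1+k_2+k_3+k_4=g$, and counting integer points of each parametric polytope with \texttt{barvinok} before summing the generic pieces. This is essentially the same argument, so nothing further is needed.
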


\fussy

\begin{corollary}
Let $\mathcal{S}(5,g)$ be the set of numerical semigroups with multiplicity five and genus $g$. Then $\#\mathcal{S}(5,g)\leq \#\mathcal{S}(5,g+1)$.
\end{corollary}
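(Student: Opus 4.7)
The plan is to deduce the corollary from the explicit formula of Proposition~\ref{th:ngs-mult5} together with Lemma~\ref{creciente}, following the same strategy used to prove $\#\mathcal{S}(4,g) \le \#\mathcal{S}(4,g+1)$ at the end of Section~\ref{sec:mul-4}. Recall that the formula of Proposition~\ref{th:ngs-mult5} is produced, in analogy with the multiplicity-four case, by applying Lemma~2.2 of \cite{karakas} with $m=5$: the Kunz coordinate vectors of the elements of $\mathcal{S}(5,g)$ split into four classes according to which index $i^{\ast}\in\{1,2,3,4\}$ achieves the maximum $m^{\ast}=5k_{i^{\ast}}+i^{\ast}$, and each class is the set of integer points of an explicit rational polytope in three free variables (after using $k_1+k_2+k_3+k_4=g$ to eliminate one coordinate). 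Each of the four counts is then a quasi-polynomial in $g$ computed with \texttt{barvinok}, and Proposition~\ref{th:ngs-mult5} is the sum of these four quasi-polynomials.

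Granting that formula, the function $g\mapsto\#\mathcal{S}(5,g)$ for $g\ge 14$ is a quasi-polynomial whose coefficients depend only on $g$ modulo some fixed integer $n$. Inspection of the denominators of the floor functions appearing in Proposition~\ref{th:ngs-mult5} (namely $2,3,4,5,6,7,9,10,15$) shows that one may take $n=\mathrm{lcm}(2,3,4,5,6,7,9,10,15)=1260$. By Lemma~\ref{creciente}, the global monotonicity of $\#\mathcal{S}(5,\cdot)$ then reduces to verifying the single chain of inequalities $\#\mathcal{S}(5,kn)\le\#\mathcal{S}(5,kn+1)\le\cdots\le\#\mathcal{S}(5,(k+1)n)$ on one block of length $n$ with $kn\ge 14$. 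The finitely many remaining small genera $g\in\{0,1,\ldots,13\}$ are handled by direct enumeration, for instance with \texttt{NumericalSemigroupsWithGenus} from the \texttt{numericalsgps} package.

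The main obstacle is purely computational: the right-hand side of Proposition~\ref{th:ngs-mult5} is extraordinarily long, so the block check must be carried out with computer-algebra support rather than by hand. A convenient implementation is to substitute, for each residue $r\in\{0,1,\ldots,n-1\}$, a representative $g=Kn+r$ with $K$ large enough that every floor argument is positive, thereby reducing each floor function to an explicit integer-valued polynomial in $K$, and then to verify the $n$ resulting numerical inequalities $\#\mathcal{S}(5,g+1)\ge\#\mathcal{S}(5,g)$. Once this finite verification is completed, Lemma~\ref{creciente} delivers the conclusion for every nonnegative integer $g$.
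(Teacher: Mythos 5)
Your proposal is correct and is essentially the paper's own proof: the paper's entire argument for this corollary is ``Applying Lemma \ref{creciente} with $n=79380$,'' i.e., exactly your reduction of the quasi-polynomial of Proposition \ref{th:ngs-mult5} to a finite block check, carried out symbolically in Mathematica, with the small genera handled by direct enumeration. The only differences are that you take the smaller (and equally valid) period $n=\mathrm{lcm}(2,3,4,5,6,7,9,10,15)=1260$ instead of $79380$, and that you should be careful to verify the block inequalities as polynomial inequalities in the symbolic block index $K$ (as the paper's code does with \texttt{FullSimplify[... , k > 0]}) rather than at a single numerical representative, since the hypothesis of Lemma \ref{creciente} must hold for every block.
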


\begin{proof}
Applying Lemma \ref{creciente} with $n=79380$.
\end{proof}

These computations have been performed in \texttt{Wolfram Mathematica} \cite{mathematica} using the following code, where \texttt{mult5[g]} is the formula giving in Proposition~\ref{th:ngs-mult5}.

\begin{verbatim}
polynomials = Table[
  FullSimplify[mult5[79380 k + i], k \[Element] Integers], {i, 0, 79380}];
pol[k_] := Evaluate[polynomials];
dif = Expand[Table[
  pol[79380 k + i + 1][[i]] - pol[79380 k + i][[i]], {i, 1, 79381}]];
And@@Table[FullSimplify[dif[[i]] > 0, k > 0], {i, 1, 79381}]
\end{verbatim}

\end{document}